\documentclass[a4paper,11pt]{article}
\usepackage[multidot]{grffile}
\usepackage{latexsym,amssymb,enumerate,amsmath,epsfig,amsthm}
\usepackage[margin=1in]{geometry}
\usepackage{setspace,color}
\usepackage{graphics}
\usepackage{graphicx}
\usepackage[ruled]{algorithm2e}
\usepackage{empheq}
\usepackage{bm,amsmath}
\usepackage{subcaption}
\usepackage{hyperref}
\hypersetup{
    colorlinks=true,
    linkcolor=blue,
    filecolor=magenta,
    urlcolor=cyan,
    citecolor=red,
    pdfpagemode=FullScreen,
}

\usepackage{rotating,booktabs,array,amsmath,amssymb}
\newcolumntype{C}{>{$\displaystyle}c<{$}}
\usepackage{multirow}
\usepackage{lineno}
\AtBeginDocument{%
  \setlength{\abovedisplayskip}{6pt}%
  \setlength{\belowdisplayskip}{6pt}%
  \setlength{\abovedisplayshortskip}{4pt}%
  \setlength{\belowdisplayshortskip}{4pt}%
}

\newcommand{\bx}{\mathbf{x}}
\newcommand{\by}{\mathbf{y}}
\newcommand{\bz}{\mathbf{z}}
\newcommand{\bp}{\mathbf{p}}

\newcommand{\ba}{\mathbf{a}}
\newcommand{\bb}{\mathbf{b}}
\newcommand{\bc}{\mathbf{c}}
\newcommand{\bu}{\mathbf{u}}
\newcommand{\bv}{\mathbf{v}}
\newcommand{\bU}{\mathbf{U}}
\newcommand{\bV}{\mathbf{V}}
\newcommand{\bA}{\mathbf{A}}
\newcommand{\bB}{\mathbf{B}}
\newcommand{\bD}{\mathbf{D}}
\newcommand{\bw}{\mathbf{w}}

\newcommand{\Log}{\operatorname{Log}}

\numberwithin{equation}{section}

\theoremstyle{plain}
\newtheorem{thm}{Theorem}[section]
\newtheorem{prop}[thm]{Proposition}
\newtheorem{lemma}[thm]{Lemma}
\newtheorem{corollary}[thm]{Corollary}

\theoremstyle{definition}

\theoremstyle{remark}
\newtheorem{remark}[thm]{Remark}

\newcommand{\Sph}{\mathbb{S}^2}

\newcommand{\slerp}{\operatorname{SLERP}}

\newcommand{\dd}{\,\mathrm{d}}

\title{Analytic First Derivatives of SIDER Interpolation}
\author{
Shingyu Leung\thanks{Department of Mathematics, the Hong Kong University of Science and Technology, Clear Water Bay, Hong Kong. Email: {\bf masyleung@ust.hk}}
}
\date{}

\begin{document}

\maketitle

\begin{abstract}
Spherical interpolation is required in numerical and geometric applications in which the unknowns are constrained to remain on the unit sphere.  Spherical Interpolation of orDER \(n\) (SIDER-\(n\)) was introduced as the high-order reconstruction component of spherical essentially non-oscillatory interpolation, where the reconstruction is built entirely from spherical linear interpolation (SLERP) operations and therefore preserves the spherical constraint exactly.  This paper develops analytic first-derivative formulas for SIDER curves of arbitrary order.  The central observation is that the recursive definition of SIDER can be differentiated by direct chain-rule propagation through its binary tree of SLERP operations.  After deriving the total derivative of SLERP with moving endpoints, we obtain compact recursions for the derivative of SIDER-\(n\), including simplified formulas at interpolation nodes and practical formulas at middle points between consecutive sampling locations.  The latter are relevant when a reconstruction is evaluated halfway between data samples, as occurs in several high-order reconstruction-based numerical algorithms.  The base case SIDER2 is treated explicitly, and SIDER3 and SIDER4 are used to illustrate the recursive mechanism.  We also prove that the derivative is tangent to the sphere at every reconstructed point, including both sampling points and middle points.  The resulting formulas extend the original SIDER/SENO framework by supplying differential information for sphere-valued reconstructions, with potential use in high-order finite-volume, ENO/WENO, SENO-type, and related methods for conservation laws and evolution problems.
\end{abstract}

\section{Introduction}

High-order interpolation and local reconstruction are central tools in numerical analysis, scientific computing, geometric modeling, and the numerical approximation of partial differential equations.  In many classical settings, the reconstructed quantity is treated as a Euclidean vector, and polynomial interpolation or polynomial reconstruction provides a natural local model.  In a growing number of applications, however, the data are constrained by geometry.  The reconstructed states may represent directions, surface normals, orientation variables, director fields, normalized magnetization vectors, or other quantities whose admissible values lie on a nonlinear manifold rather than in a linear vector space.  In such problems, the reconstruction should preserve the underlying geometric constraint throughout the parameter interval, not merely at the prescribed data points.

The model setting considered in this paper is a sequence of unit vectors
\(\bp_0,\bp_1,\ldots,\bp_n\in \Sph\subset \mathbb{R}^3 .\)
The goal is to construct a curve \(\mathbf{S}_n(t)\) that interpolates these data and remains on \(\Sph\) for all relevant parameter values.  This requirement is not automatically satisfied by standard Euclidean polynomial interpolation.  Even if all sample values have unit length, a polynomial curve passing through them generally leaves the sphere between samples.  A common remedy is to normalize the Euclidean interpolant pointwise, but this procedure changes the geometry of the construction and may complicate the analysis of accuracy, stability, and derivative information.  For applications in which the reconstructed curve is subsequently differentiated or used inside a numerical method, such a projection-based correction can also obscure the relation between the interpolation formula and the resulting tangent vectors.

A natural alternative is to build the reconstruction directly from intrinsic spherical operations.  The fundamental two-point operation is spherical linear interpolation, or SLERP.  Given two non-antipodal points on the sphere, SLERP follows the short great-circle arc joining them and therefore remains on \(\Sph\) by construction.  This operation was introduced and popularized in the computer-graphics and quaternion-interpolation literature by Shoemake \cite{shoemake_85}; further background on quaternion interpolation and related constructions may be found in \cite{dam_98}.  Since SLERP is sphere-valued, any curve obtained by composing SLERP operations is also sphere-valued, provided the relevant geodesic branches are chosen consistently.

The SIDER interpolation procedure uses precisely this principle.  It was introduced as part of the spherical essentially non-oscillatory interpolation framework, SENO, for high-order interpolation of sphere-valued data \cite{fonleu23}.  SIDER may be viewed as a spherical analogue of a de Casteljau-type construction.  In the Euclidean de Casteljau algorithm, higher-order curves are produced by repeated affine interpolation.  In SIDER, affine interpolation is replaced by SLERP.  This replacement makes the construction intrinsic to the sphere.  At the same time, SIDER differs from an ordinary B\'ezier construction because the prescribed data are interpolation points rather than merely control points.  Thus SIDER combines a recursive de Casteljau-like structure with an interpolation property adapted to spherical data.

The original SIDER/SENO framework was designed primarily for high-order, non-oscillatory reconstruction.  In that setting, SIDER supplies high-order spherical interpolants on candidate stencils, while SENO selects a locally least oscillatory candidate in the spirit of ENO methodology.  This connects SIDER/SENO to the broad family of ENO and WENO methods, which use nonlinear stencil selection or nonlinear weights to suppress oscillations near nonsmooth features \cite{harengoshcha87,shuosh88,liu_94,shu97}.  Related geometric and nonlinear interpolation ideas have appeared in spherical averaging and spline constructions \cite{busfil01}, incremental SLERP methods \cite{barhasben04}, interpolation on rotation groups and other matrix manifolds \cite{shi09,gawleo18}, and higher-order rigid-body motion interpolation \cite{haarbach_18}.  Geometric ENO-type methods have also been developed for interpolation and curve-evolution problems \cite{sidkimshu97}.

The present paper extends the SIDER framework by deriving analytic first-derivative formulas for SIDER curves of arbitrary order.  This derivative information is important for both geometric and numerical reasons.  Geometrically, the derivative of a sphere-valued reconstruction gives the tangent direction, or velocity, of the interpolating curve.  It describes how the reconstructed state changes along the sphere and provides local differential information that is not contained in the point values alone.  Numerically, derivative information is needed whenever a reconstruction is used not only to approximate values but also to approximate slopes, directional changes, or differential quantities associated with the reconstructed field.

This situation arises naturally in reconstruction-based numerical methods.  High-order finite-difference, finite-volume, ENO, and WENO schemes are built from local reconstructions whose values are evaluated at distinguished points between neighboring samples \cite{harengoshcha87,shuosh88,liu_94,shu97}.  In some algorithms, derivatives of the local reconstruction are also required.  For example, derivative information can enter generalized Riemann solvers, inverse Lax--Wendroff procedures, source-term discretizations, high-order quadrature rules, or evolution formulas that depend on local slopes \cite{liosou95,dinshuzha20}.  When such methods are applied to manifold-valued or sphere-valued data, the derivative of the reconstruction should respect the same geometry as the reconstructed values.  This is one of the motivations for developing an analytic derivative theory for SIDER.

A useful way to describe the relevant evaluation points is through the index coordinate
$
        \xi=nt$.
In this coordinate, the sampling points are located at the integers \(\xi=0,1,\ldots,n\).  Points midway between adjacent samples are located at
$
        \xi=j+\frac12$ for $j=0,\ldots,n-1 .
$
Equivalently, in the normalized SIDER parameter \(t\), these middle points correspond to
$
        t=\frac{j+1/2}{n}.
$
For example, if four data points are labeled by \(0,1,2,3\), then \(n=3\), and the middle points occur at \(\xi=0.5,1.5,2.5\), or equivalently at \(t=1/6,1/2,5/6\).  These middle-point derivatives are of independent interest in interpolation theory and become especially useful when SIDER reconstructions are incorporated into numerical schemes that evaluate reconstructed quantities between neighboring samples.

The derivative formulas developed here are not obtained by differentiating a fully expanded high-order expression.  Such an approach would quickly become unwieldy.  Instead, we use the recursive structure of SIDER itself.  The key observation is that SIDER-\(n\) is a composition of SLERP maps.  Therefore, once the total derivative of one SLERP operation with moving endpoints is known, the derivative of the entire SIDER curve follows by repeated application of the chain rule.  This gives an analytic recursion for
\(\bD_n[\bp_0,\ldots,\bp_n](t) = \frac{\dd}{\dd t}\mathbf{S}_n[\bp_0,\ldots,\bp_n](t),\)
valid for arbitrary parameter values.  The same formula can then be specialized to the sampling points and to the middle points.

An important feature of the resulting derivative is that it is tangent to the sphere without any additional projection.  Since SIDER is constructed entirely from SLERP operations, the curve satisfies
$
        \|\mathbf{S}_n(t)\|^2=1
$
for all admissible \(t\).  Differentiating this identity gives
$
        \mathbf{S}_n(t)\cdot \bD_n(t)=0,
$
so that
$
        \bD_n(t)\in T_{\mathbf{S}_n(t)}\Sph .
$
Thus, at a sampling point \(t_j=j/n\), where \(\mathbf{S}_n(t_j)=\bp_j\), the derivative lies in \(T_{\bp_j}\Sph\).  At a middle point \(t=(j+1/2)/n\), the derivative lies in the tangent space at the reconstructed middle value.  This tangent-space property is a consequence of the analytic construction itself and does not require a post-processing correction.

The main contribution of this paper is therefore a recursive, implementation-ready first-derivative theory for SIDER interpolation.  We derive the total derivative of SLERP with moving endpoints, use it to obtain the general SIDER-\(n\) derivative recursion, and then specialize the result to interpolation nodes, endpoints, and middle points.  The derivation shows that the derivative computation can be interpreted as forward-mode differentiation of the SIDER evaluation algorithm.  This viewpoint leads to compact formulas, avoids long expanded expressions for high-order curves, and is directly suitable for numerical implementation.

The paper is organized as follows.  Section~\ref{sec:background} recalls SLERP, derives the total SLERP derivative, and reviews the recursive SIDER construction.  Section~\ref{sec:general-recursion} presents the first-derivative recursion for SIDER-\(n\), including simplified formulas at interior sampling points and endpoints.  Section~\ref{sec:middle-points} derives the corresponding formulas at middle points.  Section~\ref{sec:tangent-space} proves that the derivative lies in the appropriate tangent space, both at sampling points and at middle points.  Section~\ref{sec:examples} works out SIDER2, SIDER3, and SIDER4 explicitly.  Section~\ref{sec:implementation} discusses implementation assumptions, and Section~\ref{sec:numerical-example} provides numerical visualizations and diagnostic checks.

\section{Background}\label{sec:background}

The purpose of this section is to collect the basic geometric ingredients used throughout the paper.  We first recall the SLERP formula, which provides the fundamental two-point interpolation operation on the sphere.  We then record its first-derivative formula, including the case in which both endpoints depend on the parameter.  This total derivative is the elementary building block for all subsequent SIDER derivative formulas.  Finally, we review the recursive definition of SIDER interpolation and fix the notation used later for the lower-order branches, their reparameterizations, and their derivatives.

\subsection{SLERP and its derivative}
We recall the basic SLERP identities used throughout the derivative formulas.  For non-antipodal unit vectors $\bx,\by\in\Sph$, let
\(\theta_{\bx\by}=\arccos(\bx\cdot\by)\).
The spherical linear interpolation from $\bx$ to $\by$ is
\[
        \slerp(\bx,\by;\lambda)
        =
        \frac{\sin((1-\lambda)\theta_{\bx\by})}{\sin\theta_{\bx\by}}\bx
        +
        \frac{\sin(\lambda\theta_{\bx\by})}{\sin\theta_{\bx\by}}\by .
\]
The associated spherical logarithm map is
\[
        \Log_{\bx}(\by)
        =
        \frac{\theta_{\bx\by}}{\sin\theta_{\bx\by}}
        \bigl(\by-\cos\theta_{\bx\by}\,\bx\bigr),
\]
which is a tangent vector at $\bx$ pointing along the geodesic from $\bx$ to $\by$.

The derivative formula needed for SIDER is the total derivative of SLERP with moving endpoints.  Let
\(\bz(t)=\slerp(\ba(t),\bb(t);\lambda(t)), \qquad \theta(t)=\arccos(\ba(t)\cdot\bb(t)).\)
Define
\[
        \alpha=\frac{\sin((1-\lambda)\theta)}{\sin\theta},
        \qquad
        \beta=\frac{\sin(\lambda\theta)}{\sin\theta}.
\]
Then $\bz=\alpha\ba+\beta\bb$, and differentiating gives
\begin{equation}
\label{eq:slerp-total-derivative}
\boxed{
        \mathcal{D}\slerp(\ba,\bb,\lambda;\ba',\bb',\lambda')
        =
        \alpha\ba'
        +
        \beta\bb'
        +
        (\alpha_\lambda\lambda'+\alpha_\theta\theta')\ba
        +
        (\beta_\lambda\lambda'+\beta_\theta\theta')\bb ,
}
\end{equation}
where
\(\theta' = -\frac{\ba'\cdot\bb+\ba\cdot\bb'}{\sin\theta}.\)
The scalar partial derivatives are
\[
\begin{aligned}
        \alpha_\lambda
        &=
        -\frac{\theta\cos((1-\lambda)\theta)}{\sin\theta},
        &
        \beta_\lambda
        &=
        \frac{\theta\cos(\lambda\theta)}{\sin\theta},
        \\
        \alpha_\theta
        &=
        \frac{(1-\lambda)\cos((1-\lambda)\theta)\sin\theta
        -\sin((1-\lambda)\theta)\cos\theta}{\sin^2\theta},
        &
        \beta_\theta
        &=
        \frac{\lambda\cos(\lambda\theta)\sin\theta
        -\sin(\lambda\theta)\cos\theta}{\sin^2\theta}.
\end{aligned}
\]
This identity is the differential building block used in the rest of the note.  In the special case of fixed endpoints, it reduces to
\[
        \frac{\partial}{\partial \lambda}\slerp(\bx,\by;\lambda)
        =
        \frac{\theta_{\bx\by}}{\sin\theta_{\bx\by}}
        \left[
        -\cos((1-\lambda)\theta_{\bx\by})\bx
        +
        \cos(\lambda\theta_{\bx\by})\by
        \right].
\]
In particular,
\[
        \frac{\partial}{\partial \lambda}\slerp(\bx,\by;\lambda)\bigg|_{\lambda=0}
        =
        \Log_{\bx}(\by),
        \qquad
        \frac{\partial}{\partial \lambda}\slerp(\bx,\by;\lambda)\bigg|_{\lambda=1}
        =
        -\Log_{\by}(\bx).
\]
If the two moving endpoints coincide at a parameter value, say $\ba(t_0)=\bb(t_0)$, then the smooth limiting derivative of the outer SLERP is
\[
        \frac{\dd}{\dd t}\slerp(\ba(t),\bb(t);\lambda(t))\bigg|_{t=t_0}
        =
        (1-\lambda(t_0))\ba'(t_0)+\lambda(t_0)\bb'(t_0).
\]
Changing $\lambda$ has no first-order effect in this limiting case because both endpoints of the SLERP are the same point.

\subsection{SIDER interpolation}

The SIDER construction provides an intrinsic interpolation procedure for ordered data on the sphere.  Its defining feature is that the interpolant is built entirely from spherical linear interpolation, or SLERP.  Consequently, each intermediate point generated during the construction remains on the sphere, and the final interpolating curve is sphere-valued by construction.

The idea is closely related to the classical de Casteljau algorithm for B\'ezier curves.  In the Euclidean setting, de Casteljau's algorithm repeatedly applies affine linear interpolation between control points.  SIDER follows the same recursive philosophy, but replaces affine interpolation by SLERP.  This replacement is essential: Euclidean linear interpolation between two points on the sphere generally leaves the sphere, whereas SLERP follows the short geodesic arc connecting the two points.

There is, however, an important distinction between SIDER and ordinary B\'ezier interpolation.  In a B\'ezier curve, the intermediate control points usually do not lie on the curve.  In SIDER, the prescribed data points are interpolation points.  Thus SIDER-$n$ is constructed so that, for data
\(\bp_0,\bp_1,\ldots,\bp_n\in \Sph,\)
the corresponding curve satisfies
\[
        \mathbf{S}_n[\bp_0,\ldots,\bp_n]\!\left(\frac{j}{n}\right)=\bp_j,
        \qquad j=0,\ldots,n.
\]
The parameter interval is normalized to $[0,1]$, so the data are assumed to occur at uniformly spaced parameter values.

\subsubsection{The quadratic construction: SIDER2}

The base nontrivial construction is SIDER2.  Given three points
\(\bp_0,\bp_1,\bp_2\in \Sph,\)
we first define two extrapolated spherical control points
\(\bc_\ba=\slerp(\bp_2,\bp_1;2)\) and \(\bc_\bb=\slerp(\bp_0,\bp_1;2)\).
These points are obtained by extending the geodesic arcs beyond the middle point $\bp_1$.  More precisely, $\bc_\ba$ lies on the geodesic determined by $\bp_2$ and $\bp_1$, extrapolated past $\bp_1$, while $\bc_\bb$ lies on the geodesic determined by $\bp_0$ and $\bp_1$, also extrapolated past $\bp_1$.

The SIDER2 curve is then defined by
\begin{equation}
\label{eq:sider2}
        \mathbf{S}_2[\bp_0,\bp_1,\bp_2](t)
        =
        \slerp\!\left(
        \slerp(\bp_0,\bc_\ba;t),
        \slerp(\bc_\bb,\bp_2;t);
        t
        \right),
        \qquad 0\leq t\leq 1.
\end{equation}
This formula has the same nested structure as a quadratic de Casteljau construction.  The first inner SLERP moves from $\bp_0$ to $\bc_\ba$, while the second inner SLERP moves from $\bc_\bb$ to $\bp_2$.  The outer SLERP then blends these two intermediate points using the same parameter $t$.

The extrapolated points $\bc_\ba$ and $\bc_\bb$ are chosen precisely so that the curve interpolates the middle point.  Indeed,
\(\mathbf{S}_2[\bp_0,\bp_1,\bp_2](0)=\bp_0\), \(\mathbf{S}_2[\bp_0,\bp_1,\bp_2](1)=\bp_2.\)
At the midpoint $t=1/2$, the two inner SLERP curves produce points whose geodesic midpoint is $\bp_1$.  Therefore,
\(\mathbf{S}_2[\bp_0,\bp_1,\bp_2]\!\left(\frac12\right)=\bp_1.\)
Thus SIDER2 interpolates the three prescribed data points at the normalized parameter values
\(0,\frac12,\) and 1.

\subsubsection{The recursive construction}

Higher-order SIDER curves are defined recursively.  Suppose that SIDER-$(n-1)$ has already been defined.  Given
\(\bp_0,\bp_1,\ldots,\bp_n\in \Sph,\)
we define two lower-order curves.  The left curve uses the first $n$ data points,
\(\mathbf{S}_{n-1}[\bp_0,\ldots,\bp_{n-1}],\)
while the right curve uses the last $n$ data points,
\(\mathbf{S}_{n-1}[\bp_1,\ldots,\bp_n].\)
These two curves are evaluated at shifted and rescaled parameter values.  Define
\(g_n(t)=\frac{n}{n-1}t\) and \(h_n(t)=\frac{n}{n-1}t-\frac{1}{n-1}\).
Then, for $n\geq 3$, SIDER-$n$ is defined by
\begin{equation}
\label{eq:sider-n}
        \mathbf{S}_n[\bp_0,\ldots,\bp_n](t)
        =
        \slerp\!\left(
        \mathbf{S}_{n-1}[\bp_0,\ldots,\bp_{n-1}](g_n(t)),
        \mathbf{S}_{n-1}[\bp_1,\ldots,\bp_n](h_n(t));
        t
        \right).
\end{equation}

The functions $g_n$ and $h_n$ align the lower-order curves with the interpolation nodes of the order-$n$ curve.  At the node
\(t_j=\frac{j}{n}$ for $ j=1,\ldots,n-1,\)
we have
\(g_n(t_j)=j/(n-1)\) and \(h_n(t_j)=(j-1)/(n-1)\).
Therefore,
\[
        \mathbf{S}_{n-1}[\bp_0,\ldots,\bp_{n-1}]\!\left(\frac{j}{n-1}\right)=\bp_j,
\]
and
\[
        \mathbf{S}_{n-1}[\bp_1,\ldots,\bp_n]\!\left(\frac{j-1}{n-1}\right)=\bp_j.
\]
Hence the two lower-order curves agree at each interior data point.  The outer SLERP is then a SLERP from $\bp_j$ to itself, so it also returns $\bp_j$.  This proves the interpolation property
\[
        \mathbf{S}_n[\bp_0,\ldots,\bp_n]\!\left(\frac{j}{n}\right)=\bp_j,
        \qquad j=0,\ldots,n.
\]

For example, SIDER3 is obtained from two SIDER2 curves:
\[
        \mathbf{S}_3[\bp_0,\bp_1,\bp_2,\bp_3](t)
        =
        \slerp\!\left(
        \mathbf{S}_2[\bp_0,\bp_1,\bp_2]\!\left(\frac{3t}{2}\right),
        \mathbf{S}_2[\bp_1,\bp_2,\bp_3]\!\left(\frac{3t-1}{2}\right);
        t
        \right).
\]
It interpolates the four data points at
\(0,\frac13, \frac23,1.\)
Similarly, SIDER4 is obtained from two SIDER3 curves:
\[
        \mathbf{S}_4[\bp_0,\bp_1,\bp_2,\bp_3,\bp_4](t)
        =
        \slerp\!\left(
        \mathbf{S}_3[\bp_0,\bp_1,\bp_2,\bp_3]\!\left(\frac{4t}{3}\right),
        \mathbf{S}_3[\bp_1,\bp_2,\bp_3,\bp_4]\!\left(\frac{4t-1}{3}\right);
        t
        \right),
\]
and it interpolates the five data points at
\(0, \frac14,\frac12,\frac34,1.\)

The construction is intrinsic to the sphere because every operation appearing in \eqref{eq:sider2} and \eqref{eq:sider-n} is a SLERP between unit vectors.  Thus, under the usual nondegeneracy assumptions that the relevant pairs of points are not antipodal and that a consistent geodesic branch is chosen, all intermediate quantities and all final interpolated values remain on $\Sph$.  This geometric property is the main reason SIDER is well suited for spherical interpolation problems: it preserves the manifold constraint exactly, rather than enforcing it afterward by normalization.

\section{General first-derivative recursion}\label{sec:general-recursion}

We now derive the first-derivative formula for SIDER-$n$.  The main point is that no new geometric ingredient is required beyond the derivative of a single SLERP operation.  Since SIDER-$n$ is defined recursively as a SLERP of two SIDER-$(n-1)$ curves, its derivative follows by applying the ordinary chain rule to this recursive structure.

For convenience, define
\(\bD_n[\bp_0,\ldots,\bp_n](t) := \frac{\dd}{\dd t}\mathbf{S}_n[\bp_0,\ldots,\bp_n](t).\)
Thus $\bD_n$ denotes the derivative of the SIDER-$n$ curve with respect to the normalized parameter $t\in[0,1]$.

For $n\geq 3$, recall that SIDER-$n$ is defined by
\[
        \mathbf{S}_n[\bp_0,\ldots,\bp_n](t)
        =
        \slerp\!\left(
        \mathbf{S}_{n-1}[\bp_0,\ldots,\bp_{n-1}](g_n(t)),
        \mathbf{S}_{n-1}[\bp_1,\ldots,\bp_n](h_n(t));
        t
        \right),
\]
where
\(g_n(t)=\frac{n}{n-1}t\) and \(h_n(t)=\frac{n}{n-1}t-\frac{1}{n-1}\).
Introduce the shorthand
\[
\begin{aligned}
\bA(t)&=\mathbf{S}_{n-1}[\bp_0,\ldots,\bp_{n-1}](g_n(t)),&
\bB(t)&=\mathbf{S}_{n-1}[\bp_1,\ldots,\bp_n](h_n(t)).
\end{aligned}
\]
With this notation, the recursion is simply \(\mathbf{S}_n[\bp_0,\ldots,\bp_n](t)=\slerp(\bA(t),\bB(t);t)\).

The two lower-order branches $\bA(t)$ and $\bB(t)$ themselves depend on $t$ through the affine reparameterizations $g_n$ and $h_n$.  Since
\(g_n'(t)=h_n'(t)=\frac{n}{n-1},\)
the chain rule gives
\(\bA'(t) = \frac{n}{n-1} \bD_{n-1}[\bp_0,\ldots,\bp_{n-1}](g_n(t)),\)
and
\(\bB'(t) = \frac{n}{n-1} \bD_{n-1}[\bp_1,\ldots,\bp_n](h_n(t)).\)
Therefore, differentiating the outer SLERP yields
\begin{equation}
\label{eq:general-recursion}
\boxed{
        \bD_n[\bp_0,\ldots,\bp_n](t)
        =
        \mathcal{D}\slerp
        \left(
        \bA(t),\bB(t),t;
        \bA'(t),\bB'(t),1
        \right).
}
\end{equation}
Here $\mathcal{D}\slerp(\ba,\bb,\lambda;\ba',\bb',\lambda')$ denotes the total derivative of
\(\slerp(\ba(t),\bb(t);\lambda(t))\)
when
\(\ba=\ba(t),\qquad \bb=\bb(t),\qquad \lambda=\lambda(t).\)
In the present case, the outer interpolation parameter is $\lambda(t)=t$, so $\lambda'(t)=1$.

Equation \eqref{eq:general-recursion} is the central recursive derivative formula.  It reduces the derivative of a SIDER-$n$ curve to the derivatives of two SIDER-$(n-1)$ curves.  Applying the same formula repeatedly reduces the problem to the derivative of SIDER2, which is explicit.  This is the natural chain-rule interpretation of the recursive SIDER construction.

\subsection{Interior interpolation nodes}

The formula simplifies considerably at the interpolation nodes.  Let
\(t_j=\frac{j}{n}, \qquad j=0,\ldots,n.\)
For an interior node, where $1\leq j\leq n-1$, the two lower-order branches meet at the same data point.  Indeed,
\(g_n(t_j) = \frac{n}{n-1}\frac{j}{n} = \frac{j}{n-1},\)
and
\(h_n(t_j) = \frac{n}{n-1}\frac{j}{n} - \frac{1}{n-1} = \frac{j-1}{n-1}.\)
Therefore,
\[
        \bA(t_j)
        =
        \mathbf{S}_{n-1}[\bp_0,\ldots,\bp_{n-1}]
        \!\left(\frac{j}{n-1}\right)
        =
        \bp_j,
\]
and
\[
        \bB(t_j)
        =
        \mathbf{S}_{n-1}[\bp_1,\ldots,\bp_n]
        \!\left(\frac{j-1}{n-1}\right)
        =
        \bp_j.
\]
Thus
\(\bA(t_j)=\bB(t_j)=\bp_j.\)

At such a point, the outer SLERP has equal endpoints.  The derivative of the outer SLERP then reduces to the weighted combination
\(\frac{\dd}{\dd t}\slerp(\bA(t),\bB(t);t) = (1-t)\bA'(t)+t\bB'(t),\)
evaluated at $t=t_j$.  Substituting $t_j=j/n$ and the formulas for $\bA'(t_j)$ and $\bB'(t_j)$ gives
\begin{equation}
\label{eq:interior-node}
\boxed{
        \bD_n\!\left(\frac{j}{n}\right)
        =
        \frac{n-j}{n-1}
        \bD_{n-1}[\bp_0,\ldots,\bp_{n-1}]
        \!\left(\frac{j}{n-1}\right)
        +
        \frac{j}{n-1}
        \bD_{n-1}[\bp_1,\ldots,\bp_n]
        \!\left(\frac{j-1}{n-1}\right).
}
\end{equation}
This identity is one of the most useful consequences of the recursive construction.  It shows that the derivative at an interior SIDER-$n$ node is obtained from the derivatives of the two lower-order branches that meet at that node.

One should note that the coefficients in \eqref{eq:interior-node} are not a convex pair: their sum is
\(\frac{n-j}{n-1}+\frac{j}{n-1} = \frac{n}{n-1}.\)
This factor appears because both lower-order branches are reparameterized by the scale factor $n/(n-1)$.  Thus \eqref{eq:interior-node} should be interpreted as a chain-rule weighted combination, rather than as a convex average.

\subsection{Endpoint formulas}

The endpoint formulas have a slightly different form because the two lower-order branches do not meet at the outer endpoints.

At the left endpoint $t=0$, we have
\(\bA(0) = \mathbf{S}_{n-1}[\bp_0,\ldots,\bp_{n-1}](0) = \bp_0,\)
whereas
\[
        \bB(0)
        =
        \mathbf{S}_{n-1}[\bp_1,\ldots,\bp_n]
        \!\left(-\frac{1}{n-1}\right).
\]
The right branch is therefore evaluated at a parameter value outside its interpolation interval.  This is a natural consequence of the recursive SIDER definition near the endpoint.

Since the outer SLERP parameter is $0$ at $t=0$, the endpoint derivative of the outer SLERP gives
\begin{equation}
\label{eq:left-endpoint}
\boxed{
        \bD_n(0)
        =
        \frac{n}{n-1}
        \bD_{n-1}[\bp_0,\ldots,\bp_{n-1}](0)
        +
        \Log_{\bp_0}
        \left(
        \mathbf{S}_{n-1}[\bp_1,\ldots,\bp_n]
        \!\left(-\frac{1}{n-1}\right)
        \right).
}
\end{equation}
The first term comes from the motion of the left branch.  The second term is the tangent vector at $\bp_0$ pointing toward the extrapolated value of the right branch.  It appears because the outer SLERP itself begins at $\bp_0$ and initially points toward $\bB(0)$.

Similarly, at the right endpoint $t=1$, we have
\(\bB(1) = \mathbf{S}_{n-1}[\bp_1,\ldots,\bp_n](1) = \bp_n,\)
while
\[
        \bA(1)
        =
        \mathbf{S}_{n-1}[\bp_0,\ldots,\bp_{n-1}]
        \!\left(\frac{n}{n-1}\right).
\]
Thus the left branch is evaluated beyond its interpolation interval.  Since the outer SLERP parameter is $1$ at $t=1$, the endpoint derivative gives
\begin{equation}
\label{eq:right-endpoint}
\boxed{
        \bD_n(1)
        =
        \frac{n}{n-1}
        \bD_{n-1}[\bp_1,\ldots,\bp_n](1)
        -
        \Log_{\bp_n}
        \left(
        \mathbf{S}_{n-1}[\bp_0,\ldots,\bp_{n-1}]
        \!\left(\frac{n}{n-1}\right)
        \right).
}
\end{equation}
The sign in the logarithm term is negative because, at $\lambda=1$, the outer SLERP arrives at its second endpoint $\bp_n$.  Changing the interpolation parameter moves the curve in the direction from the first endpoint toward the second endpoint, so the corresponding endpoint contribution is the negative of the logarithm from $\bp_n$ back toward the extrapolated left-branch value.

\subsection{Summary of the recursive derivative recipe}

The first derivative of SIDER-$n$ is computed as follows.

First, form the two lower-order branches
\[
        \bA(t)
        =
        \mathbf{S}_{n-1}[\bp_0,\ldots,\bp_{n-1}](g_n(t)),
        \qquad
        \bB(t)
        =
        \mathbf{S}_{n-1}[\bp_1,\ldots,\bp_n](h_n(t)).
\]
Second, compute their derivatives using
\(\bA'(t) = \frac{n}{n-1} \bD_{n-1}[\bp_0,\ldots,\bp_{n-1}](g_n(t)),\)
and
\(\bB'(t) = \frac{n}{n-1} \bD_{n-1}[\bp_1,\ldots,\bp_n](h_n(t)).\)
Third, apply the total SLERP derivative:
\[
        \bD_n[\bp_0,\ldots,\bp_n](t)
        =
        \mathcal{D}\slerp
        \left(
        \bA(t),\bB(t),t;
        \bA'(t),\bB'(t),1
        \right).
\]

At interior interpolation nodes, this reduces to \eqref{eq:interior-node}.  At the two endpoints, it reduces to \eqref{eq:left-endpoint} and \eqref{eq:right-endpoint}.  These formulas provide a complete recursive procedure for differentiating SIDER curves of arbitrary order.

\section{Derivatives at middle points}\label{sec:middle-points}

The preceding recursion gives \(\bD_n(t)\) at any admissible parameter value.  We now record a specialization that is useful when the reconstruction is evaluated halfway between consecutive sampling locations.  It is convenient to introduce the index coordinate
\(\xi=nt,\qquad 0\leq \xi\leq n,\)
so that the sampling points are located at \(\xi=0,1,\ldots,n\).  The middle point between the samples \(\bp_j\) and \(\bp_{j+1}\) is then
\(\xi_{j+1/2}=j+\frac12,\qquad t_{j+1/2}=\frac{j+1/2}{n},\qquad j=0,\ldots,n-1.\)
If \(\widehat{\mathbf{S}}_n(\xi)=\mathbf{S}_n(\xi/n)\), then the derivative with respect to the index coordinate is
\(\widehat{\bD}_n(\xi) := \frac{\dd}{\dd \xi}\widehat{\mathbf{S}}_n(\xi) = \frac1n\,\bD_n(\xi/n).\)
Thus the formulas below give derivatives with respect to the normalized parameter \(t\); derivatives with respect to \(\xi\) are obtained by multiplying by \(1/n\).

For \(n\geq3\), define the left and right branches as in Section~\ref{sec:general-recursion}.  At the middle point \(t_{j+1/2}\), their parameters are
\(g_n(t_{j+1/2})=\frac{j+1/2}{n-1},\qquad h_n(t_{j+1/2})=\frac{j-1/2}{n-1}.\)
Set
\[
\begin{aligned}
        \bA_{j+1/2}
        &=
        \mathbf{S}_{n-1}[\bp_0,\ldots,\bp_{n-1}]
        \!\left(\frac{j+1/2}{n-1}\right),\\
        \bB_{j+1/2}
        &=
        \mathbf{S}_{n-1}[\bp_1,\ldots,\bp_n]
        \!\left(\frac{j-1/2}{n-1}\right),
\end{aligned}
\]
and
\[
\begin{aligned}
        \bU_{j+1/2}
        &=
        \frac{n}{n-1}
        \bD_{n-1}[\bp_0,\ldots,\bp_{n-1}]
        \!\left(\frac{j+1/2}{n-1}\right),\\
        \bV_{j+1/2}
        &=
        \frac{n}{n-1}
        \bD_{n-1}[\bp_1,\ldots,\bp_n]
        \!\left(\frac{j-1/2}{n-1}\right).
\end{aligned}
\]
Then the first derivative at the middle point is
\begin{equation}
\label{eq:middle-point-general}
\boxed{
        \bD_n\!\left(t_{j+1/2}\right)
        =
        \mathcal{D}\slerp
        \left(
        \bA_{j+1/2},\bB_{j+1/2},t_{j+1/2};
        \bU_{j+1/2},\bV_{j+1/2},1
        \right).
}
\end{equation}
Unlike the derivative formula at interpolation nodes, there is generally no equal-endpoint simplification here.  The two lower-order branch values \(\bA_{j+1/2}\) and \(\bB_{j+1/2}\) are usually distinct, and the full total SLERP derivative is needed.  Near the two outer ends, one of the lower-order branches may be evaluated outside \([0,1]\), just as in the endpoint formulas.

For the base case \(n=2\), the two middle points occur at \(\xi=1/2\) and \(\xi=3/2\), corresponding to \(t=1/4\) and \(t=3/4\).  With the notation of the SIDER2 construction,
\[
        \bD_2\!\left(\frac14\right)
        =
        \mathcal{D}\slerp
        \left(
        \bA_2\!\left(\frac14\right),\bB_2\!\left(\frac14\right),\frac14;
        \bA_2'\!\left(\frac14\right),\bB_2'\!\left(\frac14\right),1
        \right),
\]
and
\[
        \bD_2\!\left(\frac34\right)
        =
        \mathcal{D}\slerp
        \left(
        \bA_2\!\left(\frac34\right),\bB_2\!\left(\frac34\right),\frac34;
        \bA_2'\!\left(\frac34\right),\bB_2'\!\left(\frac34\right),1
        \right).
\]

For SIDER3, with four data points labelled \(0,1,2,3\), the middle locations are \(\xi=0.5,1.5,2.5\), or equivalently \(t=1/6,1/2,5/6\).  Let \(S_2^L=S_2[\bp_0,\bp_1,\bp_2]\) and \(S_2^R=S_2[\bp_1,\bp_2,\bp_3]\).  The derivatives at these three middle points are
\[
\begin{aligned}
        \bD_3\!\left(\frac16\right)
        &=
        \mathcal{D}\slerp\left(
        S_2^L\!\left(\frac14\right),S_2^R\!\left(-\frac14\right),\frac16;
        \frac32D_2^L\!\left(\frac14\right),\frac32D_2^R\!\left(-\frac14\right),1\right),\\
        \bD_3\!\left(\frac12\right)
        &=
        \mathcal{D}\slerp\left(
        S_2^L\!\left(\frac34\right),S_2^R\!\left(\frac14\right),\frac12;
        \frac32D_2^L\!\left(\frac34\right),\frac32D_2^R\!\left(\frac14\right),1\right),\\
        \bD_3\!\left(\frac56\right)
        &=
        \mathcal{D}\slerp\left(
        S_2^L\!\left(\frac54\right),S_2^R\!\left(\frac34\right),\frac56;
        \frac32D_2^L\!\left(\frac54\right),\frac32D_2^R\!\left(\frac34\right),1\right).
\end{aligned}
\]
Here \(D_2^L\) and \(D_2^R\) denote the derivatives of the left and right SIDER2 branches.  If the derivative is desired with respect to the index coordinate \(\xi\), each of these expressions is multiplied by \(1/3\).

For SIDER4, the same rule gives the middle locations \(\xi=0.5,1.5,2.5,3.5\), equivalently \(t=1/8,3/8,5/8,7/8\).  Let \(S_3^L=S_3[\bp_0,\bp_1,\bp_2,\bp_3]\) and \(S_3^R=S_3[\bp_1,\bp_2,\bp_3,\bp_4]\).  Then
\[
\begin{aligned}
        \bD_4\!\left(\frac18\right)
        &=
        \mathcal{D}\slerp\left(
        S_3^L\!\left(\frac16\right),S_3^R\!\left(-\frac16\right),\frac18;
        \frac43D_3^L\!\left(\frac16\right),\frac43D_3^R\!\left(-\frac16\right),1\right),\\
        \bD_4\!\left(\frac38\right)
        &=
        \mathcal{D}\slerp\left(
        S_3^L\!\left(\frac12\right),S_3^R\!\left(\frac16\right),\frac38;
        \frac43D_3^L\!\left(\frac12\right),\frac43D_3^R\!\left(\frac16\right),1\right),\\
        \bD_4\!\left(\frac58\right)
        &=
        \mathcal{D}\slerp\left(
        S_3^L\!\left(\frac56\right),S_3^R\!\left(\frac12\right),\frac58;
        \frac43D_3^L\!\left(\frac56\right),\frac43D_3^R\!\left(\frac12\right),1\right),\\
        \bD_4\!\left(\frac78\right)
        &=
        \mathcal{D}\slerp\left(
        S_3^L\!\left(\frac76\right),S_3^R\!\left(\frac56\right),\frac78;
        \frac43D_3^L\!\left(\frac76\right),\frac43D_3^R\!\left(\frac56\right),1\right).
\end{aligned}
\]
Again, each \(D_3\) term may be expanded using the SIDER3 recursion, and each \(D_2\) term ultimately reduces to the explicit SIDER2 derivative.  These formulas show that middle-point derivatives require no new theory; they are obtained by evaluating the same recursive derivative at half-integer positions in the index coordinate.

\section{Tangent-space character of the derivative formula}\label{sec:tangent-space}

The derivative formulas derived above are written as ambient vectors in \(\mathbb{R}^3\).  Since the reconstructed values lie on \(\mathbb{S}^2\), the desired derivative should belong to the tangent plane of the sphere at the reconstructed point.  This statement should not be interpreted merely as the elementary fact that the derivative of an exact sphere-valued curve is tangent to the sphere.  The more relevant point for the present construction is that the explicit algebraic formula used to compute the derivative has the tangent-space property by itself.  In other words, the recursive formula does not have to be followed by a tangent-plane projection.

We first prove this property for one SLERP operation.  The result is stated at the level of the derivative operator \(\mathcal{D}\slerp\), rather than at the level of an abstract differentiable curve.

\begin{lemma}[Tangency preservation of the SLERP derivative formula]\label{lem:slerp-formula-tangent}
Let \(\ba,\bb\in\mathbb{S}^2\) be non-antipodal, and let \(\bu\in T_{\ba}\mathbb{S}^2\), \(\bv\in T_{\bb}\mathbb{S}^2\).  Let \(\mu\in\mathbb{R}\) be the rate of change of the interpolation parameter.  Define
\[
        \theta=\arccos(\ba\cdot\bb),\qquad
        \alpha=\frac{\sin((1-\lambda)\theta)}{\sin\theta},
        \qquad
        \beta=\frac{\sin(\lambda\theta)}{\sin\theta},
\]
and set
\[
        \bz=\slerp(\ba,\bb;\lambda)=\alpha\ba+\beta\bb .
\]
Let
\[
        \dot\theta
        =
        -\frac{\bu\cdot\bb+\ba\cdot\bv}{\sin\theta},
\]
and define
\[
        \dot\alpha=\alpha_\lambda \mu+\alpha_\theta\dot\theta,
        \qquad
        \dot\beta=\beta_\lambda \mu+\beta_\theta\dot\theta .
\]
The algebraic SLERP derivative formula
\[
        \bw
        =
        \mathcal{D}\slerp(\ba,\bb,\lambda;\bu,\bv,\mu)
        =
        \alpha\bu+\beta\bv+\dot\alpha\,\ba+\dot\beta\,\bb
\]
satisfies
\[
        \bz\cdot\bw=0 .
\]
Consequently,
\[
        \mathcal{D}\slerp(\ba,\bb,\lambda;\bu,\bv,\mu)
        \in T_{\slerp(\ba,\bb;\lambda)}\mathbb{S}^2 .
\]
\end{lemma}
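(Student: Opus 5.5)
Set \(c=\cos\theta=\ba\cdot\bb\) and \(s=\sin\theta>0\); this is legitimate because \(\ba,\bb\) are non-antipodal, and we assume \(\ba\neq\bb\) for now. The plan is to expand \(\bz\cdot\bw\) directly, using the relations \(\ba\cdot\ba=\bb\cdot\bb=1\), \(\ba\cdot\bu=0\) and \(\bb\cdot\bv=0\). Since \(\bz=\alpha\ba+\beta\bb\),
\[
\bz\cdot\bw
=\alpha\beta\,(\bb\cdot\bu+\ba\cdot\bv)
+\dot\alpha(\alpha+\beta c)+\dot\beta(\alpha c+\beta).
\]
By definition of \(\dot\theta\), the first group equals \(-\alpha\beta\, s\,\dot\theta\). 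The expression therefore splits into a part proportional to \(\mu\) and a part proportional to \(\dot\theta\). Because \(\mu\) and \(\dot\theta\) enter linearly and independently, it suffices to show that each coefficient vanishes:
\[
\text{(i)}\quad \alpha_\lambda(\alpha+\beta c)+\beta_\lambda(\alpha c+\beta)=0,
\qquad
\text{(ii)}\quad \alpha_\theta(\alpha+\beta c)+\beta_\theta(\alpha c+\beta)=\alpha\beta\, s .
\]

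The cleanest way to obtain both identities is to avoid the explicit trigonometric formulas and use the scalar function \(F(\lambda,\theta)=\alpha^2+\beta^2+2\alpha\beta\cos\theta\). This is \(\|\bz\|^2\) for any pair of unit vectors at angle \(\theta\). The classical SLERP identity gives \(F\equiv1\) for all \((\lambda,\theta)\); it follows from the addition formulas, or from the fact that SLERP is a rotation in the plane of \(\ba\) and \(\bb\). Differentiating in \(\lambda\) gives \(2[\alpha_\lambda(\alpha+\beta c)+\beta_\lambda(\beta+\alpha c)]=0\), which is (i). Differentiating in \(\theta\) gives \(2[\alpha_\theta(\alpha+\beta c)+\beta_\theta(\beta+\alpha c)]-2\alpha\beta s=0\), which is (ii). Substituting (i) and (ii) into the expansion yields \(\bz\cdot\bw=\mu\cdot 0+\dot\theta(\alpha\beta s-\alpha\beta s)=0\).

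The main point requiring care is verifying \(F\equiv1\) as a scalar identity. I would check it directly: \(\sin^2((1-\lambda)\theta)+\sin^2(\lambda\theta)+2\sin((1-\lambda)\theta)\sin(\lambda\theta)\cos\theta=\sin^2\theta\). This follows by writing \(\sin((1-\lambda)\theta)=\sin\theta\cos\lambda\theta-\cos\theta\sin\lambda\theta\) and simplifying. I would also confirm that the partial derivatives \(\alpha_\lambda,\alpha_\theta,\beta_\lambda,\beta_\theta\) listed in Section~\ref{sec:background} are the genuine partial derivatives of \(\alpha,\beta\), so that differentiating \(F\) is legitimate.

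The degenerate case \(\ba=\bb\) (\(\theta=0\)) needs separate handling because \(\dot\theta\) is then undefined. There I would interpret the formula through the limiting derivative \((1-\lambda)\bu+\lambda\bv\) from Section~\ref{sec:background}. This vector is tangent at \(\bz=\ba\) because \(\bu\) and \(\bv\) both lie in \(T_{\ba}\mathbb{S}^2\).

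Finally, \(\bz\in\mathbb{S}^2\) since \(\|\bz\|^2=F=1\). Hence \(\bz\cdot\bw=0\) means exactly that \(\bw\in T_{\bz}\mathbb{S}^2\).
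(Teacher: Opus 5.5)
Your proof is correct and follows the paper's argument: you expand $\bz\cdot\bw$, replace $\ba\cdot\bv+\bb\cdot\bu$ by $-\sin\theta\,\dot\theta$, and cancel the rest by differentiating $\alpha^2+\beta^2+2\alpha\beta\cos\theta=1$ in $\lambda$ and $\theta$. You split this into the two coefficient identities (i) and (ii), where the paper combines them into one differential. Your separate treatment of $\ba=\bb$ through the limiting derivative $(1-\lambda)\bu+\lambda\bv$ is a worthwhile addition, because the formula is undefined there ($\sin\theta=0$), yet the paper later applies the lemma at interior nodes where the two branches coincide.
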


\begin{proof}
The proof is an algebraic verification of the formula.  Since \(\bu\in T_{\ba}\mathbb{S}^2\) and \(\bv\in T_{\bb}\mathbb{S}^2\), we have
\[
        \ba\cdot\bu=0,\qquad \bb\cdot\bv=0 .
\]
Moreover, \(\ba\cdot\bb=\cos\theta\), and the prescribed value of \(\dot\theta\) is exactly the algebraic derivative of the relation \(\cos\theta=\ba\cdot\bb\) under the perturbations \(\bu\) and \(\bv\).  Indeed,
\[
        -\sin\theta\,\dot\theta
        =
        \bu\cdot\bb+\ba\cdot\bv .
\]

Now compute the normal component of \(\bw\) at \(\bz\).  Since \(\bz=\alpha\ba+\beta\bb\), we obtain
\[
\begin{aligned}
        \bz\cdot\bw
        &=
        (\alpha\ba+\beta\bb)\cdot
        \left(\alpha\bu+\beta\bv+\dot\alpha\,\ba+\dot\beta\,\bb\right)  \\
        &=
        \alpha\beta(\ba\cdot\bv+\bb\cdot\bu)
        +
        \dot\alpha(\alpha+\beta\cos\theta)
        +
        \dot\beta(\alpha\cos\theta+\beta).
\end{aligned}
\]
Using \(\ba\cdot\bv+\bb\cdot\bu=-\sin\theta\,\dot\theta\), this becomes
\[
        \bz\cdot\bw
        =
        -\alpha\beta\sin\theta\,\dot\theta
        +
        \dot\alpha(\alpha+\beta\cos\theta)
        +
        \dot\beta(\alpha\cos\theta+\beta).
\]
It remains to show that the right-hand side is zero.

The SLERP coefficients satisfy the identity
\[
        \alpha^2+\beta^2+2\alpha\beta\cos\theta=1 .
\]
This identity follows directly from the definitions of \(\alpha\) and \(\beta\), using
\[
        \sin^2 A+\sin^2 B+2\sin A\sin B\cos(A+B)=\sin^2(A+B),
\]
with \(A=(1-\lambda)\theta\) and \(B=\lambda\theta\).  Differentiating the identity
\[
        \alpha^2+\beta^2+2\alpha\beta\cos\theta=1
\]
with respect to the independent variables \(\lambda\) and \(\theta\), and then applying the increments \(\mu\) and \(\dot\theta\), gives
\[
        \dot\alpha(\alpha+\beta\cos\theta)
        +
        \dot\beta(\alpha\cos\theta+\beta)
        -
        \alpha\beta\sin\theta\,\dot\theta
        =
        0 .
\]
Therefore \(\bz\cdot\bw=0\).  Hence the vector produced by the explicit formula \(\mathcal{D}\slerp\) is tangent to the sphere at the SLERP output \(\bz\).
\end{proof}

We now apply this algebraic invariance recursively to SIDER.  The point of the next proposition is that tangency is preserved by the derivative computation itself.

\begin{prop}[Tangency preservation of the recursive SIDER derivative formula]\label{prop:sider-formula-tangent}
Assume that all SLERP operations appearing in the SIDER construction are nonsingular.  Let
\[
        \mathbf{S}_n(t)=\mathbf{S}_n[\bp_0,\ldots,\bp_n](t)
\]
be the SIDER-\(n\) reconstruction, and let
\[
        \bD_n(t)=\bD_n[\bp_0,\ldots,\bp_n](t)
\]
be the vector produced by the recursive derivative formula \eqref{eq:general-recursion}, with the SIDER2 derivative used as the base case.  Then, in exact arithmetic,
\[
        \mathbf{S}_n(t)\cdot\bD_n(t)=0
\]
for every admissible \(t\).  Equivalently,
\[
        \bD_n(t)\in T_{\mathbf{S}_n(t)}\mathbb{S}^2 .
\]
\end{prop}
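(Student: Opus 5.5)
The argument is an induction on the order \(n\), using Lemma~\ref{lem:slerp-formula-tangent} as the inductive engine. The statement proved will be slightly stronger and uniform in the data: for every order \(m\ge 2\), every admissible data set, and every admissible parameter value \(s\) (including values outside \([0,1]\), since the endpoint and middle-point formulas evaluate branches at such \(s\)), the vector \(\bD_m(s)\) produced by the recursion is orthogonal to \(\mathbf{S}_m(s)\). This stronger form is needed because the inductive step feeds in lower-order branches evaluated at \(g_n(t)\) and \(h_n(t)\), which may fall outside \([0,1]\).

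\emph{Base case \(n=2\).} The SIDER2 derivative is itself obtained by propagating \(\mathcal{D}\slerp\) through the nested formula \eqref{eq:sider2}.
\begin{enumerate}[(i)]
\item The control points \(\bc_\ba,\bc_\bb\) are constant, so their derivatives are zero. Zero is trivially tangent at any point.
\item The inner curve \(\slerp(\bp_0,\bc_\ba;t)\) has fixed endpoints, so its derivative is \(\mathcal{D}\slerp(\bp_0,\bc_\ba,t;\mathbf 0,\mathbf 0,1)\). By the lemma (with \(\bu=\bv=\mathbf 0\)), this is tangent at the inner SLERP value. The same holds for \(\slerp(\bc_\bb,\bp_2;t)\).
\item The outer derivative is \(\mathcal{D}\slerp\) applied to the two inner values, their derivatives, and \(\mu=1\). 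Step (ii) gives exactly the tangency hypotheses of the lemma, so the lemma yields \(\mathbf{S}_2(t)\cdot\bD_2(t)=0\).
\end{enumerate}
The SLERP extrapolation at \(\lambda=2\) causes no trouble, since the lemma places no restriction on \(\lambda\).

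\emph{Inductive step.} Assume the claim holds at order \(n-1\), and fix \(t\).
\begin{enumerate}[(i)]
\item The induction hypothesis applied to both data sets gives \(\bA(t)\cdot\bD_{n-1}[\bp_0,\ldots,\bp_{n-1}](g_n(t))=0\) and \(\bB(t)\cdot\bD_{n-1}[\bp_1,\ldots,\bp_n](h_n(t))=0\).
\item Multiplying by the scalar \(n/(n-1)\) preserves orthogonality, so \(\bA'(t)\in T_{\bA(t)}\Sph\) and \(\bB'(t)\in T_{\bB(t)}\Sph\).
\item The nonsingularity assumption makes \(\bA(t),\bB(t)\) non-antipodal unit vectors. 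Both lie on \(\Sph\) because they are SLERP outputs, which is the exact-arithmetic sphere preservation noted in the background section.
\item Lemma~\ref{lem:slerp-formula-tangent} with \(\ba=\bA(t)\), \(\bb=\bB(t)\), \(\lambda=t\), \(\bu=\bA'(t)\), \(\bv=\bB'(t)\), \(\mu=1\) then gives \(\mathbf{S}_n(t)\cdot\bD_n(t)=0\) by \eqref{eq:general-recursion}.
\end{enumerate}

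\emph{Main obstacle: coincident endpoints.} At interior nodes \(t_j\) we have \(\bA=\bB=\bp_j\). There \(\theta=0\), and the lemma's formula (with its division by \(\sin\theta\)) is not literally defined. The recursion instead uses the limiting formula \((1-t_j)\bA'(t_j)+t_j\bB'(t_j)\), as in \eqref{eq:interior-node}. This case is handled directly: both \(\bA'\) and \(\bB'\) are orthogonal to \(\bp_j\) by the induction hypothesis, and \(\mathbf{S}_n(t_j)=\bp_j\), so any linear combination of them is tangent at \(\bp_j\). The same argument covers the coincident-endpoint limit inside SIDER2 at \(t=1/2\), should that case arise there. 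All other admissible evaluations, including the endpoint formulas \eqref{eq:left-endpoint}, \eqref{eq:right-endpoint} and the middle-point formula \eqref{eq:middle-point-general}, are instances of \eqref{eq:general-recursion} and are covered by the general step. The \(\Log\) terms in the endpoint formulas are the \(\lambda\)-contributions of \(\mathcal{D}\slerp\), and they are tangent by the same lemma.
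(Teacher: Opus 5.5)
Your proposal is correct and follows the paper's proof. Both argue by induction on $n$, applying Lemma~\ref{lem:slerp-formula-tangent} first to the fixed-endpoint inner SLERPs of SIDER2 and then to the outer SLERP at each level, and both use that the factor $n/(n-1)$ preserves tangency. Where you go further, you make explicit what the paper's proof leaves implicit: you state the induction hypothesis uniformly over data and over out-of-range parameters, and you treat separately the coincident-endpoint case at interior nodes, where $\theta=0$, the lemma's formula is undefined, and the recursion actually uses $(1-t_j)\bA'(t_j)+t_j\bB'(t_j)$.
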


\begin{proof}
The proof is by induction on the order \(n\), but the induction is applied to the derivative formula rather than to an already-known smooth sphere-valued curve.

For the base case \(n=2\), the SIDER2 construction is
\[
        \mathbf{S}_2(t)=\slerp(\mathbf{A}_2(t),\mathbf{B}_2(t);t),
\]
where \(\mathbf{A}_2(t)\) and \(\mathbf{B}_2(t)\) are themselves SLERP curves.  The derivatives \(\mathbf{A}_2'(t)\) and \(\mathbf{B}_2'(t)\) are obtained from the fixed-endpoint SLERP derivative formula.  By Lemma~\ref{lem:slerp-formula-tangent}, these derivative vectors satisfy
\[
        \mathbf{A}_2(t)\cdot\mathbf{A}_2'(t)=0,
        \qquad
        \mathbf{B}_2(t)\cdot\mathbf{B}_2'(t)=0 .
\]
Therefore the two branch derivative vectors supplied to the outer SLERP derivative formula are tangent to their respective branch values.  Applying Lemma~\ref{lem:slerp-formula-tangent} once more to the outer SLERP gives
\[
        \mathbf{S}_2(t)\cdot\bD_2(t)=0 .
\]
Thus the derivative vector produced by the SIDER2 formula lies in \(T_{\mathbf{S}_2(t)}\mathbb{S}^2\).

Assume now that the statement holds for SIDER-\((n-1)\).  For SIDER-\(n\), write
\[
        \mathbf{S}_n(t)=\slerp(\mathbf{A}(t),\mathbf{B}(t);t),
\]
where
\[
        \mathbf{A}(t)
        =
        \mathbf{S}_{n-1}[\bp_0,\ldots,\bp_{n-1}](g_n(t)),
        \qquad
        \mathbf{B}(t)
        =
        \mathbf{S}_{n-1}[\bp_1,\ldots,\bp_n](h_n(t)).
\]
The recursive derivative formula uses
\[
        \mathbf{A}'_{\rm alg}(t)
        =
        \frac{n}{n-1}
        \bD_{n-1}[\bp_0,\ldots,\bp_{n-1}](g_n(t)),
\]
and
\[
        \mathbf{B}'_{\rm alg}(t)
        =
        \frac{n}{n-1}
        \bD_{n-1}[\bp_1,\ldots,\bp_n](h_n(t)).
\]
By the induction hypothesis,
\[
        \mathbf{A}(t)\cdot \mathbf{A}'_{\rm alg}(t)=0,
        \qquad
        \mathbf{B}(t)\cdot \mathbf{B}'_{\rm alg}(t)=0,
\]
because multiplication by the scalar factor \(n/(n-1)\) does not change tangency.  Thus the inputs passed to the outer SLERP derivative formula satisfy precisely the hypotheses of Lemma~\ref{lem:slerp-formula-tangent}.  Therefore
\[
        \mathbf{S}_n(t)\cdot
        \mathcal{D}\slerp
        \left(
        \mathbf{A}(t),\mathbf{B}(t),t;
        \mathbf{A}'_{\rm alg}(t),\mathbf{B}'_{\rm alg}(t),1
        \right)
        =
        0 .
\]
By definition, the vector on the second factor is exactly the recursively computed derivative \(\bD_n(t)\).  Hence
\[
        \mathbf{S}_n(t)\cdot\bD_n(t)=0 .
\]
This completes the induction.
\end{proof}

\begin{corollary}[Tangency at interpolation nodes]\label{cor:sampling-point-tangent}
At each interpolation node \(t_j=j/n\), \(j=0,\ldots,n\), the derivative vector computed by the recursive formula satisfies
\[
        \bD_n(t_j)\in T_{\bp_j}\mathbb{S}^2,
        \qquad
        \bp_j\cdot\bD_n(t_j)=0 .
\]
\end{corollary}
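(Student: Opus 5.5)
This follows by specializing Proposition~\ref{prop:sider-formula-tangent} to \(t=t_j\) and then using the interpolation property to identify the reconstructed point. First, \(t_j=j/n\in[0,1]\) is admissible. The nonsingularity assumption of the proposition holds there: for \(1\le j\le n-1\) the outer SLERP has coincident endpoints \(\bA(t_j)=\bB(t_j)=\bp_j\), which is certainly non-antipodal, and the inner SLERPs are those of the two SIDER-\((n-1)\) branches. The proposition therefore gives \(\mathbf{S}_n(t_j)\cdot\bD_n(t_j)=0\). Second, the interpolation property \(\mathbf{S}_n[\bp_0,\ldots,\bp_n](j/n)=\bp_j\), established in the recursive construction, lets us replace \(\mathbf{S}_n(t_j)\) by \(\bp_j\). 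This yields \(\bp_j\cdot\bD_n(t_j)=0\), that is, \(\bD_n(t_j)\in T_{\bp_j}\Sph\).

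The main obstacle is the interior nodes. There \(\theta=0\), so the general formula \(\mathcal{D}\slerp\) is singular, since it involves division by \(\sin\theta\). The paper instead uses the limiting formula, which leads to \eqref{eq:interior-node}. So I would verify the interior case directly from \eqref{eq:interior-node}, without passing through Lemma~\ref{lem:slerp-formula-tangent}. I would argue by induction on \(n\), with the claim that \(\bD_{n-1}\) at each node of its own order is tangent to the corresponding data point.

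\begin{itemize}
\item The first term of \eqref{eq:interior-node} is \(\bD_{n-1}[\bp_0,\ldots,\bp_{n-1}](j/(n-1))\), the left branch's derivative at its node \(j\), where its value is \(\bp_j\). It is therefore tangent at \(\bp_j\), either by the induction hypothesis or by Proposition~\ref{prop:sider-formula-tangent} at order \(n-1\) combined with interpolation.
\item The second term is \(\bD_{n-1}[\bp_1,\ldots,\bp_n]((j-1)/(n-1))\), the right branch's derivative at its node \(j-1\). Its value there is also \(\bp_j\), so this term is tangent at \(\bp_j\) as well.
\item A linear combination of vectors in \(T_{\bp_j}\Sph\) lies in \(T_{\bp_j}\Sph\). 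The weights need not be convex, which is the point of the remark following \eqref{eq:interior-node}.
\end{itemize}

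The base case is SIDER2 at \(t=1/2\). There I would check directly that the outer SLERP endpoints \(\bA_2(1/2)\) and \(\bB_2(1/2)\) are tangent-compatible, or simply invoke the smooth-limit argument.

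The two endpoints \(j=0\) and \(j=n\) are nonsingular in general, so the proposition applies to them directly. They can also be checked from \eqref{eq:left-endpoint} and \eqref{eq:right-endpoint}. At \(j=0\), the first term is tangent at \(\bp_0\) by induction. The second term, \(\Log_{\bp_0}(\cdot)\), is a multiple of \(\by-\cos\theta\,\bp_0\), and its dot product with \(\bp_0\) is \(\cos\theta-\cos\theta=0\). The case \(j=n\) is symmetric.
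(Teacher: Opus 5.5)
Your proof is correct, and at the interior nodes it is more careful than the paper's. The paper's proof consists only of your first paragraph: evaluate Proposition~\ref{prop:sider-formula-tangent} at $t=t_j$, then replace $\mathbf{S}_n(t_j)$ by $\bp_j$.

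You add the observation that for $n\ge 3$ and $1\le j\le n-1$ this step is not literally justified. There the outer SLERP has $\bA(t_j)=\bB(t_j)=\bp_j$, so $\theta=0$. Then $\alpha$, $\beta$ and $\dot\theta$ in Lemma~\ref{lem:slerp-formula-tangent} are all of the form $0/0$; indeed $\theta(t)$ generically has a corner at $t_j$. So the proposition's nonsingularity hypothesis fails, and what the recursion actually computes there is the limiting formula \eqref{eq:interior-node}.

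Your induction on that formula is sound. The maps $g_n$ and $h_n$ send $j/n$ to the branch nodes $j/(n-1)$ and $(j-1)/(n-1)$, and both branches take the value $\bp_j$ there. So each term is tangent at $\bp_j$, and so is any linear combination of them. The endpoints follow from \eqref{eq:left-endpoint} and \eqref{eq:right-endpoint} together with $\bp_0\cdot\Log_{\bp_0}(\by)=0$ for unit $\by$, and symmetrically at $\bp_n$.

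The paper's route gives a uniform one-line statement for all $t$. At interior nodes, however, it implicitly needs either your argument or a continuity argument: tangency at nearby $t\neq t_j$ plus continuity of the recursively computed $\bD_n$. Your route gives a self-contained proof at the nodes that never evaluates a singular formula.

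Two points need cleaning up.

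\begin{itemize}
\item Your first paragraph says the nonsingularity hypothesis holds at interior nodes because coincident endpoints are non-antipodal. That contradicts your own second paragraph: coincident endpoints are just as singular for the formula as antipodal ones. Drop that sentence and let the induction carry the interior case.
\item The SIDER2 base case is misdescribed. At $n=2$, $t=\frac12$ is not a coincidence: $\bA_2(\frac12)$ and $\bB_2(\frac12)$ are generically distinct, with $\bp_1$ as their geodesic midpoint. So there is no smooth limit to invoke, and ``tangent-compatible'' has no content. Applying the lemma to the inner fixed-endpoint SLERPs and then to the outer one gives tangency at all three SIDER2 nodes $t=0,\frac12,1$. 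You need all three, because the inductive step at $j=1$, at $j=n-1$, and at the two endpoints uses the branch derivatives at their own endpoint nodes.
\end{itemize}
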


\begin{proof}
The SIDER interpolation property gives \(\mathbf{S}_n(t_j)=\bp_j\).  Evaluating Proposition~\ref{prop:sider-formula-tangent} at \(t=t_j\) gives
\[
        \bp_j\cdot\bD_n(t_j)
        =
        \mathbf{S}_n(t_j)\cdot\bD_n(t_j)
        =
        0 .
\]
Therefore \(\bD_n(t_j)\in T_{\bp_j}\mathbb{S}^2\).
\end{proof}

\begin{corollary}[Tangency at middle points]\label{cor:middle-point-tangent}
Let
\[
        t_{j+1/2}=\frac{j+1/2}{n},
        \qquad
        j=0,\ldots,n-1,
\]
and define the reconstructed middle value by
\[
        \mathbf{m}_{j+1/2}
        =
        \mathbf{S}_n[\bp_0,\ldots,\bp_n](t_{j+1/2}) .
\]
Then the derivative vector computed by the recursive formula satisfies
\[
        \bD_n(t_{j+1/2})\in T_{\mathbf{m}_{j+1/2}}\mathbb{S}^2,
        \qquad
        \mathbf{m}_{j+1/2}\cdot\bD_n(t_{j+1/2})=0 .
\]
If the derivative is expressed in the index coordinate \(\xi=nt\), namely
\[
        \widehat{\bD}_n(j+1/2)
        =
        \frac{1}{n}\bD_n(t_{j+1/2}),
\]
then
\[
        \widehat{\bD}_n(j+1/2)\in T_{\mathbf{m}_{j+1/2}}\mathbb{S}^2
\]
as well.
\end{corollary}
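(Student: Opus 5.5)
The plan is to obtain Corollary~\ref{cor:middle-point-tangent} directly from Proposition~\ref{prop:sider-formula-tangent}, in the same way Corollary~\ref{cor:sampling-point-tangent} was obtained. First I will check that the middle parameter \(t_{j+1/2}=(j+1/2)/n\) is admissible for every \(j=0,\ldots,n-1\). It lies in \((0,1)\). The middle-point formula \eqref{eq:middle-point-general} is literally the general recursion \eqref{eq:general-recursion} evaluated at this \(t\), with \(\bA_{j+1/2},\bB_{j+1/2}\) as branch values and \(\bU_{j+1/2},\bV_{j+1/2}\) as scaled branch derivatives. So the vector \(\bD_n(t_{j+1/2})\) in the statement is exactly the vector the proposition controls.

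The one subtlety is that near the outer ends a lower-order branch is evaluated outside \([0,1]\). For example, \(h_n(t_{1/2})=-\tfrac{1/2}{n-1}\) and \(g_n(t_{n-1/2})=\tfrac{n-1/2}{n-1}\), and the same happens recursively inside those branches. I will handle this by reading ``admissible'' in Proposition~\ref{prop:sider-formula-tangent} as requiring only that every SLERP in the recursion tree be nonsingular, i.e.\ that no pair of endpoints is antipodal. This is a standing hypothesis. The inductive argument never uses that the parameter lies in \([0,1]\): Lemma~\ref{lem:slerp-formula-tangent} holds for any real \(\lambda\), including extrapolation values such as \(\lambda=2\) in the SIDER2 control points and the shifted arguments of the branches. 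I expect this bookkeeping about extrapolated arguments to be the only point needing care, and even it is a remark rather than a computation.

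With admissibility settled, set \(\mathbf{m}_{j+1/2}=\mathbf{S}_n(t_{j+1/2})\). The proposition then gives \(\mathbf{m}_{j+1/2}\cdot\bD_n(t_{j+1/2})=0\), which is the tangency claim \(\bD_n(t_{j+1/2})\in T_{\mathbf{m}_{j+1/2}}\Sph\).

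For the index coordinate, I will use \(\widehat{\bD}_n(j+1/2)=\tfrac1n\bD_n(t_{j+1/2})\) together with the fact that \(T_{\mathbf{m}}\Sph=\{\bw:\mathbf{m}\cdot\bw=0\}\) is a linear subspace. It is therefore closed under multiplication by \(1/n\), so \(\mathbf{m}_{j+1/2}\cdot\widehat{\bD}_n(j+1/2)=\tfrac1n\cdot 0=0\).
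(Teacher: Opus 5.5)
Your proposal is correct and follows the paper's proof: evaluate Proposition~\ref{prop:sider-formula-tangent} at $t=t_{j+1/2}$ to get $\mathbf{m}_{j+1/2}\cdot\bD_n(t_{j+1/2})=0$, then use that $T_{\mathbf{m}_{j+1/2}}\Sph$ is a linear subspace to absorb the factor $1/n$. Your extra remark is a correct clarification that the paper leaves implicit. ``Admissible'' has to mean that every SLERP in the tree is nonsingular, not that $t\in[0,1]$, because the proposition's induction already evaluates branches at $g_n(t),h_n(t)$ outside $[0,1]$, and Lemma~\ref{lem:slerp-formula-tangent} holds for every real $\lambda$.
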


\begin{proof}
By Proposition~\ref{prop:sider-formula-tangent},
\[
        \mathbf{S}_n(t)\cdot\bD_n(t)=0
\]
for every admissible \(t\).  Taking \(t=t_{j+1/2}\) gives
\[
        \mathbf{m}_{j+1/2}\cdot\bD_n(t_{j+1/2})=0 .
\]
This is exactly the condition that
\[
        \bD_n(t_{j+1/2})\in T_{\mathbf{m}_{j+1/2}}\mathbb{S}^2 .
\]
The index-coordinate derivative differs only by multiplication by the scalar \(1/n\), and hence belongs to the same tangent space.
\end{proof}

\begin{remark}[No projection is hidden in the formula]
The above result is a property of the complete recursive derivative expression.  Individual terms in \(\mathcal{D}\slerp\), such as \(\alpha\bu\), \(\beta\bv\), \(\dot\alpha\,\ba\), and \(\dot\beta\,\bb\), need not separately be tangent to the final point \(\bz=\slerp(\ba,\bb;\lambda)\).  Their normal components cancel exactly in the full expression because of the algebraic identities satisfied by the SLERP weights.  Thus the recursive derivative formula is intrinsically tangent in exact arithmetic.  In floating-point implementation, the residual
\[
        \left|\mathbf{S}_n(t)\cdot\bD_n(t)\right|
\]
should therefore be at the level of roundoff error, apart from possible loss of accuracy near singular or nearly singular SLERP configurations.  Any additional tangent-plane projection should be viewed only as a numerical stabilization or diagnostic step, not as part of the analytic formula.
\end{remark}

\section{Examples}\label{sec:examples}

The general recursion derived above gives a compact formula for the first derivative of SIDER curves of arbitrary order.  We now specialize the construction to SIDER2, SIDER3, and SIDER4 in order to make the chain-rule structure explicit.  The SIDER2 case is treated in detail because it is the base case for all higher-order formulas.  The SIDER3 case is the first genuinely recursive example and shows how the lower-order derivatives enter the outer SLERP derivative.  The SIDER4 case is presented more concisely: it records the recursive setup and representative formulas, while avoiding a repetitive list of all node and middle-point derivatives.

\subsection{SIDER2}

We begin with the base case SIDER2, since all higher-order derivative formulas eventually reduce to this construction.  Given \(\bp_0,\bp_1,\bp_2\in\Sph\), define \(\bc_\ba=\slerp(\bp_2,\bp_1;2)\) and \(\bc_\bb=\slerp(\bp_0,\bp_1;2)\).  The SIDER2 curve is obtained by two inner SLERP curves followed by one outer SLERP.  Set \(\bA_2(t)=\slerp(\bp_0,\bc_\ba;t)\) and \(\bB_2(t)=\slerp(\bc_\bb,\bp_2;t)\); equivalently,
\[
\mathbf{S}_2[\bp_0,\bp_1,\bp_2](t)=\slerp(\bA_2(t),\bB_2(t);t).
\]
The two inner curves $\bA_2$ and $\bB_2$ play the role of the two first-level curves in a quadratic de Casteljau construction.  The first starts at $\bp_0$ and moves toward the extrapolated point $\bc_\ba$, while the second starts at $\bc_\bb$ and moves toward $\bp_2$.  The outer SLERP then blends these two moving points using the same parameter $t$.

Since the endpoints of $\bA_2$ and $\bB_2$ are fixed, their derivatives are ordinary fixed-endpoint SLERP derivatives:
\(\bA_2'(t) = \frac{\partial}{\partial \lambda} \slerp(\bp_0,\bc_\ba;\lambda)\bigg|_{\lambda=t},\)
and
\(\bB_2'(t) = \frac{\partial}{\partial \lambda} \slerp(\bc_\bb,\bp_2;\lambda)\bigg|_{\lambda=t}.\)
For fixed endpoints $\bx,\by\in \Sph$, with
\(\theta_{xy}=\arccos(\bx\cdot \by),\)
we have
\[
        \frac{\partial}{\partial \lambda}\slerp(\bx,\by;\lambda)
        =
        \frac{\theta_{xy}}{\sin\theta_{xy}}
        \left[
        -\cos((1-\lambda)\theta_{xy})\bx
        +
        \cos(\lambda\theta_{xy})\by
        \right].
\]
Therefore,
\[
        \bA_2'(t)
        =
        \frac{\theta_{0a}}{\sin\theta_{0a}}
        \left[
        -\cos((1-t)\theta_{0a})\bp_0
        +
        \cos(t\theta_{0a})\bc_\ba
        \right],
\]
where
\(\theta_{0a}=\arccos(\bp_0\cdot \bc_\ba),\)
and
\[
        \bB_2'(t)
        =
        \frac{\theta_{b2}}{\sin\theta_{b2}}
        \left[
        -\cos((1-t)\theta_{b2})\bc_\bb
        +
        \cos(t\theta_{b2})\bp_2
        \right],
\]
where
\(\theta_{b2}=\arccos(\bc_\bb\cdot \bp_2).\)

The derivative of SIDER2 is now obtained by applying the total SLERP derivative to the outer interpolation:
\begin{equation}
\label{eq:d2-general}
\boxed{
        \bD_2[\bp_0,\bp_1,\bp_2](t)
        =
        \mathcal{D}\slerp
        \left(
        \bA_2(t),\bB_2(t),t;
        \bA_2'(t),\bB_2'(t),1
        \right).
}
\end{equation}
This formula is the SIDER2 version of the general chain-rule recipe.  It is already completely explicit, because $\bA_2(t)$, $\bB_2(t)$, $\bA_2'(t)$, and $\bB_2'(t)$ are all given in closed form, and the operator $\mathcal{D}\slerp$ was written explicitly in \eqref{eq:slerp-total-derivative}.

\subsubsection{Endpoint derivatives}

The endpoint derivatives have particularly simple forms.  At \(t=0\), we have \(\bA_2(0)=\bp_0\) and \(\bB_2(0)=\bc_\bb\).
The motion of the left branch contributes
\(\bA_2'(0)=\Log_{\bp_0}(\bc_\ba),\)
while the variation of the outer SLERP parameter contributes the initial geodesic velocity from $\bp_0$ toward $\bc_\bb$, namely
\(\Log_{\bp_0}(\bc_\bb).\)
Thus
\[
\boxed{
        \bD_2[\bp_0,\bp_1,\bp_2](0)
        =
        \Log_{\bp_0}(\bc_\ba)
        +
        \Log_{\bp_0}(\bc_\bb).
}
\]

At $t=1$, we have
\(\bA_2(1)=\bc_\ba, \qquad \bB_2(1)=\bp_2.\)
The motion of the right branch gives
\(\bB_2'(1)=-\Log_{\bp_2}(\bc_\bb),\)
and the endpoint contribution from the outer SLERP is
\(-\Log_{\bp_2}(\bc_\ba).\)
Therefore,
\[
\boxed{
        \bD_2[\bp_0,\bp_1,\bp_2](1)
        =
        -\Log_{\bp_2}(\bc_\ba)
        -
        \Log_{\bp_2}(\bc_\bb).
}
\]

These endpoint formulas have a natural geometric interpretation.  At the left endpoint, the derivative is the sum of two tangent directions based at $\bp_0$: one coming from the motion of the inner curve $\bA_2$, and one coming from the outer SLERP toward $\bB_2(0)=\bc_\bb$.  At the right endpoint, the same mechanism occurs in reverse, producing two negative logarithmic terms based at $\bp_2$.

\subsubsection{Derivative at the middle interpolation point}

The middle interpolation point occurs at
\(t=\frac12,\)
where
\(\mathbf{S}_2[\bp_0,\bp_1,\bp_2]\!\left(\frac12\right)=\bp_1.\)
However, the two inner points
\[
        \bA_2\!\left(\frac12\right),
        \qquad
        \bB_2\!\left(\frac12\right)
\]
need not be equal.  Rather, $\bp_1$ is obtained as the geodesic midpoint between them under the outer SLERP.  Thus the equal-endpoint simplification used later for interior nodes of SIDER-$n$, $n\geq 3$, does not apply directly to SIDER2.

The derivative at the midpoint is therefore
\[
\boxed{
        \bD_2[\bp_0,\bp_1,\bp_2]\!\left(\frac12\right)
        =
        \mathcal{D}\slerp
        \left(
        \bA_2\!\left(\frac12\right),
        \bB_2\!\left(\frac12\right),
        \frac12;
        \bA_2'\!\left(\frac12\right),
        \bB_2'\!\left(\frac12\right),
        1
        \right).
}
\]
To make this expression more explicit, define
\[
        \ba=\bA_2\!\left(\frac12\right),
        \qquad
        \bb=\bB_2\!\left(\frac12\right),
\]
and
\[
        \bu=\bA_2'\!\left(\frac12\right),
        \qquad
        \bv=\bB_2'\!\left(\frac12\right).
\]
Let
\(\phi=\arccos(\ba\cdot \bb).\)
Since
\(\bp_1=\slerp\!\left(\ba,\bb;\frac12\right),\)
we have
\(\bp_1= \frac{\ba+\bb}{2\cos(\phi/2)}.\)
The derivative of the angle $\phi$ induced by the moving endpoints $\ba$ and $\bb$ is
\(\phi' = -\frac{\bu\cdot \bb+\ba\cdot \bv}{\sin\phi}.\)
Using the total SLERP derivative at $\lambda=1/2$ gives
\[
\boxed{
\begin{aligned}
        \bD_2[\bp_0,\bp_1,\bp_2]\!\left(\frac12\right)
        &=
        \frac{1}{2\cos(\phi/2)}(\bu+\bv)
        +
        \frac{\tan(\phi/2)}{4\cos(\phi/2)}\phi'(\ba+\bb)
        \\
        &\qquad
        +
        \frac{\phi}{2\sin(\phi/2)}(\bb-\ba).
\end{aligned}
}
\]
Here the first term comes from the motion of the two inner branches, the second term accounts for the variation of the spherical angle between them, and the third term comes from the change of the outer SLERP parameter itself.

This completes the base case.  The formulas above provide both the general derivative $\bD_2(t)$ and the special values at the three interpolation nodes
\(0,\qquad \frac12,\qquad 1.\)
All higher-order SIDER derivative formulas reduce recursively to these SIDER2 expressions.

\subsection{SIDER3}

We next apply the recursive derivative formula to SIDER3.  This is the first case in which the general SIDER recursion appears explicitly.  Given four points
\(\bp_0,\bp_1,\bp_2,\bp_3\in \Sph,\)
define the two SIDER2 branches
\[
        \mathbf{S}_2^L(t)
        =
        \mathbf{S}_2[\bp_0,\bp_1,\bp_2](t),
        \qquad
        \mathbf{S}_2^R(t)
        =
        \mathbf{S}_2[\bp_1,\bp_2,\bp_3](t).
\]
The left branch interpolates
\(\bp_0,\quad \bp_1,\quad \bp_2,\)
while the right branch interpolates
\(\bp_1,\quad \bp_2,\quad \bp_3.\)
The SIDER3 curve is obtained by blending these two lower-order curves:
\[
        \mathbf{S}_3[\bp_0,\bp_1,\bp_2,\bp_3](t)
        =
        \slerp\left(
        \mathbf{S}_2^L\!\left(\frac{3t}{2}\right),
        \mathbf{S}_2^R\!\left(\frac{3t-1}{2}\right);
        t
        \right).
\]
Thus SIDER3 has the same structure as the general recursion with
\(g_3(t)=\frac{3t}{2}, \qquad h_3(t)=\frac{3t-1}{2}.\)
It interpolates the four data points at
\(t=0,\qquad t=\frac13,\qquad t=\frac23,\qquad t=1.\)

For notational clarity, define
\[
        \bA_3(t)
        =
        \mathbf{S}_2^L\!\left(\frac{3t}{2}\right),
        \qquad
        \bB_3(t)
        =
        \mathbf{S}_2^R\!\left(\frac{3t-1}{2}\right).
\]
Then
\(\mathbf{S}_3[\bp_0,\bp_1,\bp_2,\bp_3](t) = \slerp(\bA_3(t),\bB_3(t);t).\)
Differentiating the two branches gives
\(\bA_3'(t) = \frac32 \bD_2^L\!\left(\frac{3t}{2}\right),\)
and
\(\bB_3'(t) = \frac32 \bD_2^R\!\left(\frac{3t-1}{2}\right),\)
where
\[
        \bD_2^L(t)
        =
        \frac{\dd}{\dd t}\mathbf{S}_2^L(t),
        \qquad
        \bD_2^R(t)
        =
        \frac{\dd}{\dd t}\mathbf{S}_2^R(t).
\]
Therefore, for arbitrary $t$,
\begin{equation}
\label{eq:d3-general}
\boxed{
        \bD_3[\bp_0,\bp_1,\bp_2,\bp_3](t)
        =
        \mathcal{D}\slerp
        \left(
        \bA_3(t),\bB_3(t),t;
        \bA_3'(t),\bB_3'(t),1
        \right).
}
\end{equation}
This formula is fully explicit once the SIDER2 derivative formulas from the previous section are substituted.

\subsubsection{Derivative at the left endpoint}

At the left endpoint $t=0$, we have
\(\bA_3(0) = \mathbf{S}_2^L(0) = \bp_0,\)
whereas
\(\bB_3(0) = \mathbf{S}_2^R\!\left(-\frac12\right).\)
Thus the right SIDER2 branch is evaluated outside its interpolation interval.  This extrapolated value determines the initial direction of the outer SLERP.

Using the endpoint formula \eqref{eq:left-endpoint}, or equivalently differentiating the outer SLERP at parameter $0$, gives
\[
\boxed{
        \bD_3[\bp_0,\bp_1,\bp_2,\bp_3](0)
        =
        \frac32\bD_2^L(0)
        +
        \Log_{\bp_0}\!\left(
        \mathbf{S}_2^R\!\left(-\frac12\right)
        \right).
}
\]
The first term is the derivative of the left SIDER2 branch, multiplied by the reparameterization factor $3/2$.  The second term is the initial geodesic velocity of the outer SLERP from $\bp_0$ toward the extrapolated right-branch value.

\subsubsection{Derivative at the first interior node}

The first interior node is
\(t=\frac13,\)
where the SIDER3 curve interpolates $\bp_1$.  At this parameter value,
\[
        \bA_3\!\left(\frac13\right)
        =
        \mathbf{S}_2^L\!\left(\frac12\right)
        =
        \bp_1,
\]
and
\(\bB_3\!\left(\frac13\right) = \mathbf{S}_2^R(0) = \bp_1.\)
Thus the two lower-order branches meet at the same point.  The equal-endpoint SLERP derivative therefore applies.  Since the outer SLERP parameter is $t=1/3$, we obtain
\[
\begin{aligned}
        \bD_3\!\left(\frac13\right)
        &=
        \left(1-\frac13\right)\bA_3'\!\left(\frac13\right)
        +
        \frac13 \bB_3'\!\left(\frac13\right)
        \\
        &=
        \frac23
        \left[
        \frac32\bD_2^L\!\left(\frac12\right)
        \right]
        +
        \frac13
        \left[
        \frac32\bD_2^R(0)
        \right].
\end{aligned}
\]
Hence
\[
\boxed{
        \bD_3[\bp_0,\bp_1,\bp_2,\bp_3]\!\left(\frac13\right)
        =
        \bD_2^L\!\left(\frac12\right)
        +
        \frac12\bD_2^R(0).
}
\]

\subsubsection{Derivative at the second interior node}

The second interior node is
\(t=\frac23,\)
where the SIDER3 curve interpolates $\bp_2$.  At this node,
\(\bA_3\!\left(\frac23\right) = \mathbf{S}_2^L(1) = \bp_2,\)
and
\[
        \bB_3\!\left(\frac23\right)
        =
        \mathbf{S}_2^R\!\left(\frac12\right)
        =
        \bp_2.
\]
Again the two branches meet, so the equal-endpoint SLERP derivative gives
\[
\begin{aligned}
        \bD_3\!\left(\frac23\right)
        &=
        \left(1-\frac23\right)\bA_3'\!\left(\frac23\right)
        +
        \frac23 \bB_3'\!\left(\frac23\right)
        \\
        &=
        \frac13
        \left[
        \frac32\bD_2^L(1)
        \right]
        +
        \frac23
        \left[
        \frac32\bD_2^R\!\left(\frac12\right)
        \right].
\end{aligned}
\]
Therefore
\[
\boxed{
        \bD_3[\bp_0,\bp_1,\bp_2,\bp_3]\!\left(\frac23\right)
        =
        \frac12\bD_2^L(1)
        +
        \bD_2^R\!\left(\frac12\right).
}
\]

\subsubsection{Derivative at the right endpoint}

At the right endpoint $t=1$, the right branch reaches the final data point:
\(\bB_3(1) = \mathbf{S}_2^R(1) = \bp_3.\)
The left branch, however, is evaluated beyond its interpolation interval:
\(\bA_3(1) = \mathbf{S}_2^L\!\left(\frac32\right).\)
Using the endpoint formula \eqref{eq:right-endpoint}, we obtain
\[
\boxed{
        \bD_3[\bp_0,\bp_1,\bp_2,\bp_3](1)
        =
        \frac32\bD_2^R(1)
        -
        \Log_{\bp_3}\!\left(
        \mathbf{S}_2^L\!\left(\frac32\right)
        \right).
}
\]
The first term is the derivative of the right SIDER2 branch, again multiplied by the reparameterization factor $3/2$.  The second term is the endpoint contribution from the outer SLERP.

\subsubsection{Summary for SIDER3}

Combining the four node formulas, we have
\[
\boxed{
        \bD_3(0)
        =
        \frac32\bD_2^L(0)
        +
        \Log_{\bp_0}\!\left(\mathbf{S}_2^R\!\left(-\frac12\right)\right),
}
\]
\[
\boxed{
        \bD_3\!\left(\frac13\right)
        =
        \bD_2^L\!\left(\frac12\right)
        +
        \frac12\bD_2^R(0),
}
\]
\[
\boxed{
        \bD_3\!\left(\frac23\right)
        =
        \frac12\bD_2^L(1)
        +
        \bD_2^R\!\left(\frac12\right),
}
\]
and
\[
\boxed{
        \bD_3(1)
        =
        \frac32\bD_2^R(1)
        -
        \Log_{\bp_3}\!\left(\mathbf{S}_2^L\!\left(\frac32\right)\right).
}
\]

The two interior formulas illustrate the main simplification that occurs at interpolation nodes.  At $\bp_1$ and $\bp_2$, the two lower-order SIDER2 branches meet exactly, so the derivative of the outer SLERP reduces to a chain-rule weighted combination of the two branch derivatives.  The coefficients are not a convex pair; their sum is $3/2$, reflecting the fact that both SIDER2 branches are reparameterized by the factor $3/2$ inside the SIDER3 construction.

\subsection{SIDER4}

The SIDER4 case adds one further level to the same recursion.  It is useful to record its structure, but it is not necessary to list every endpoint, node, and middle-point formula separately, since all of them follow immediately from the general results above.  Given five points \(\bp_0,\ldots,\bp_4\in\Sph\), set
\[
        \mathbf{S}_3^L(t)=\mathbf{S}_3[\bp_0,\bp_1,\bp_2,\bp_3](t),
        \qquad
        \mathbf{S}_3^R(t)=\mathbf{S}_3[\bp_1,\bp_2,\bp_3,\bp_4](t).
\]
Then
\[
        \mathbf{S}_4(t)
        =
        \slerp\left(
        \mathbf{S}_3^L\!\left(\frac{4t}{3}\right),
        \mathbf{S}_3^R\!\left(\frac{4t}{3}-\frac13\right);
        t
        \right),
\]
so \(\mathbf{S}_4\) interpolates \(\bp_0,\ldots,\bp_4\) at \(t=0,1/4,1/2,3/4,1\).

Define
\[
        \bA_4(t)=\mathbf{S}_3^L\!\left(\frac{4t}{3}\right),
        \qquad
        \bB_4(t)=\mathbf{S}_3^R\!\left(\frac{4t}{3}-\frac13\right).
\]
Then \(\mathbf{S}_4(t)=\slerp(\bA_4(t),\bB_4(t);t)\), with
\[
        \bA_4'(t)=\frac43\bD_3^L\!\left(\frac{4t}{3}\right),
        \qquad
        \bB_4'(t)=\frac43\bD_3^R\!\left(\frac{4t}{3}-\frac13\right),
\]
where \(\bD_3^L=(\mathbf{S}_3^L)'\) and \(\bD_3^R=(\mathbf{S}_3^R)'\).  Therefore,
\[
\boxed{
        \bD_4(t)
        =
        \mathcal{D}\slerp
        \left(
        \bA_4(t),\bB_4(t),t;
        \bA_4'(t),\bB_4'(t),1
        \right).
}
\]
This is the complete SIDER4 derivative formula in recursive form.  It differs from the SIDER3 formula only in that the lower-order branches are SIDER3 curves rather than SIDER2 curves.

As a representative interpolation-node formula, consider the middle node \(t=1/2\), where both branches meet at \(\bp_2\):
\[
        \bA_4\!\left(\frac12\right)=\mathbf{S}_3^L\!\left(\frac23\right)=\bp_2,
        \qquad
        \bB_4\!\left(\frac12\right)=\mathbf{S}_3^R\!\left(\frac13\right)=\bp_2.
\]
Using the equal-endpoint SLERP derivative gives
\[
\boxed{
        \bD_4\!\left(\frac12\right)
        =
        \frac23\bD_3^L\!\left(\frac23\right)
        +
        \frac23\bD_3^R\!\left(\frac13\right).
}
\]
The coefficients sum to \(4/3\), not to \(1\), because both SIDER3 branches are reparameterized by the factor \(4/3\).

The same recursive expression also gives the derivatives at the middle points.  In the index coordinate \(\xi=4t\), these points are \(\xi=j+1/2\), \(j=0,\ldots,3\), corresponding to \(t=1/8,3/8,5/8,7/8\).  For example, at \(t=3/8\), which lies between the samples \(\bp_1\) and \(\bp_2\), we have
\[
        \bA_4\!\left(\frac38\right)=\mathbf{S}_3^L\!\left(\frac12\right),
        \qquad
        \bB_4\!\left(\frac38\right)=\mathbf{S}_3^R\!\left(\frac16\right),
\]
and hence
\[
\boxed{
        \bD_4\!\left(\frac38\right)
        =
        \mathcal{D}\slerp
        \left(
        \bA_4\!\left(\frac38\right),
        \bB_4\!\left(\frac38\right),
        \frac38;
        \frac43\bD_3^L\!\left(\frac12\right),
        \frac43\bD_3^R\!\left(\frac16\right),
        1
        \right).
}
\]
The other SIDER4 sampling-point and middle-point derivatives follow analogously.  Expanding any \(\bD_3\) term reduces the expression to the SIDER3 formulas above, and then to the SIDER2 base case.  This illustrates the main practical advantage of the recursive formulation: higher-order derivatives can be evaluated exactly without writing increasingly long order-specific expressions.

\section{Implementation remarks and assumptions}\label{sec:implementation}

We conclude with several implementation remarks and assumptions that are important for using the preceding derivative formulas in practice.

First, all formulas assume that the SLERP operations are well defined.  If
\(\theta=\arccos(\bx\cdot \by),\)
then the standard SLERP formula contains the factor $\sin\theta$ in the denominator.  Therefore, numerical difficulties occur when $\theta$ is close to either $0$ or $\pi$.  The case $\theta\approx 0$ is usually harmless and may be treated by replacing SLERP with its limiting linearized form.  The case $\theta\approx \pi$ is more serious: antipodal or nearly antipodal points do not determine a unique short geodesic on the sphere.  Hence the SIDER construction requires a consistent branch choice and, in practice, assumes that the relevant point pairs are sufficiently far from antipodal configurations.

Second, one should keep track of the geodesic branch used by each SLERP.  The SIDER construction is local in spirit: it is intended for ordered data whose adjacent points are close enough that the desired spherical arc is unambiguous.  This assumption is especially important when using extrapolated values such as
\(\slerp(\bp_2,\bp_1;2), \qquad \slerp(\bp_0,\bp_1;2),\)
or higher-order extrapolations generated recursively.  These extrapolated values are still defined by the same SLERP formula, but they may leave the short arc between the original points.  The derivative formulas remain valid provided the chosen branch is used consistently.

Third, the recursive formulas naturally evaluate lower-order SIDER curves outside the interval $[0,1]$ at the outer endpoints.  For example, SIDER3 involves the values
\[
        \mathbf{S}_2^R\!\left(-\frac12\right)
        \qquad\text{and}\qquad
        \mathbf{S}_2^L\!\left(\frac32\right),
\]
while SIDER4 involves
\[
        \mathbf{S}_3^R\!\left(-\frac13\right)
        \qquad\text{and}\qquad
        \mathbf{S}_3^L\!\left(\frac43\right).
\]
These terms should be interpreted as spherical extrapolations.  They are not additional assumptions or separate definitions; they are produced by evaluating the same recursive SIDER formula at parameter values outside the interpolation interval.  The endpoint derivative formulas are valid as long as these extrapolated quantities are nonsingular.

Fourth, the derivative $\bD_n(t)$ is computed as a vector in the ambient space $\mathbb{R}^3$.  Since the SIDER curve itself remains on the unit sphere, this derivative must be tangent to the sphere at the point $\mathbf{S}_n(t)$.  In exact arithmetic,
\(\|\mathbf{S}_n(t)\|^2=1\)
implies
\(\mathbf{S}_n(t)\cdot \bD_n(t)=0.\)
Thus
\(\bD_n(t)\in T_{\mathbf{S}_n(t)}\Sph.\)
In floating-point computations, the scalar quantity \(|\mathbf{S}_n(t)\cdot\bD_n(t)|\) is useful as a diagnostic for roundoff error.  For a correctly implemented analytic derivative it should be close to machine precision, because tangency follows from the identity \(\|\mathbf{S}_n(t)\|=1\).

Fifth, when the data are parameterized over a physical interval rather than over the normalized interval $[0,1]$, a scaling factor must be applied.  The formulas in this note differentiate with respect to the normalized parameter $t$.  If the physical parameter is $T\in[T_0,T_n]$ and
\(t=\frac{T-T_0}{T_n-T_0},\)
then
\(\frac{\dd}{\dd T} = \frac{1}{T_n-T_0} \frac{\dd}{\dd t}.\)
Therefore, the derivative with respect to physical time is
\(\frac{\dd}{\dd T}\mathbf{S}_n(T) = \frac{1}{T_n-T_0}\bD_n(t).\)
For nonuniform data spacing, the recursive construction and the derivative formulas would need to be modified accordingly.

Finally, the recursive derivative formula may be viewed as a manual version of forward-mode automatic differentiation.  A routine that evaluates SIDER-$n$ can be adapted to return both the point value and its derivative.  At each recursive node, the routine evaluates the two lower-order branches, computes their derivatives, and then applies the total SLERP derivative
\[
        \mathcal{D}\slerp
        \left(
        \bA(t),\bB(t),t;
        \bA'(t),\bB'(t),1
        \right).
\]
This avoids expanding high-order SIDER derivatives into long closed-form expressions.  The implementation remains compact, follows the recursive definition directly, and preserves the analytic structure of the derivative computation.

\section{Numerical example}\label{sec:numerical-example}

We now give a numerical example to illustrate the derivative formulas.  The purpose of this example is not to test convergence, but to verify the geometric properties predicted by the analysis: the SIDER curve should interpolate the prescribed data points, and the computed first derivative should be tangent to the sphere at each interpolation node.

We use five points on the unit sphere generated by
\(u_i=\frac{i}{4},\qquad i=0,\ldots,4,\)
\(\ell_i=0.20+1.50\pi u_i, \qquad \varphi_i=0.45\sin(2\pi u_i)+0.15u_i,\)
and
\[
        \bp_i=
        \begin{bmatrix}
        \cos\varphi_i\cos\ell_i\\
        \cos\varphi_i\sin\ell_i\\
        \sin\varphi_i
        \end{bmatrix}.
\]
The same data are then used to construct three examples: SIDER2 with \(\bp_0,\bp_1,\bp_2\), SIDER3 with \(\bp_0,\bp_1,\bp_2,\bp_3\), and SIDER4 with \(\bp_0,\bp_1,\bp_2,\bp_3,\bp_4\).
For each case, the SIDER curve is evaluated on a fine grid in the normalized parameter interval $[0,1]$.  The first derivative is computed using the recursive chain-rule formula derived above.  At the interpolation nodes, tangent vectors are drawn at the corresponding data points.
\begin{figure}[!htbp]
        \centering
        (a) \includegraphics[width=0.95\textwidth]{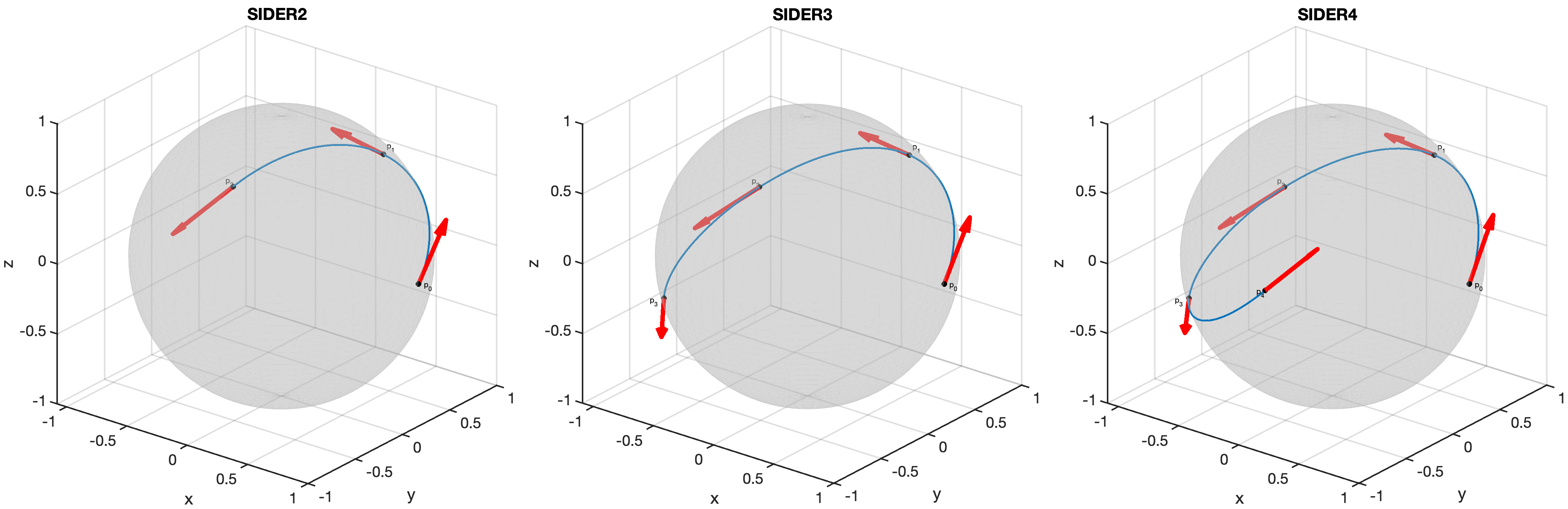} \\
        (b) \includegraphics[width=0.95\textwidth]{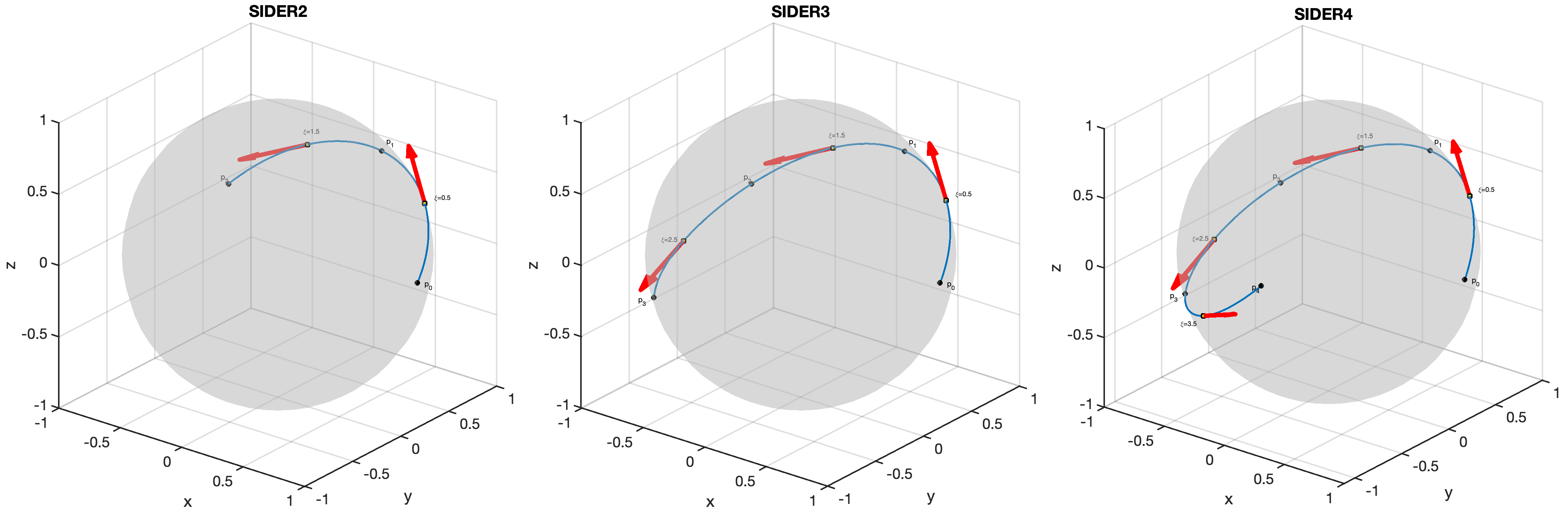}
        \caption{
        Numerical visualization of SIDER interpolants and their first derivatives.
        (a) shows derivatives at interpolation nodes, while (b) shows derivatives at middle points.
        }
        \label{fig:sider-derivative-demo}
\end{figure}

Figure~\ref{fig:sider-derivative-demo} illustrates the recursive derivative construction for SIDER2, SIDER3, and SIDER4.  In both panels, the solid curves are the SIDER interpolants on the unit sphere and the black markers are the prescribed data points.  The arrows represent the first derivatives computed from the analytic recursive formula.  They are normalized only for visualization; the actual derivative magnitudes are those obtained from the formula.

Figure~\ref{fig:sider-derivative-demo}(a) shows the derivatives at the interpolation nodes.  At these points, the curve satisfies
\(\mathbf{S}_n\!\left(\frac{j}{n}\right)=\mathbf{p}_j,\)
and the derivative belongs to the tangent space at the corresponding data point:
\[
        \mathbf{D}_n\!\left(\frac{j}{n}\right)
        \in T_{\mathbf{p}_j}\mathbb{S}^2.
\]
Figure~\ref{fig:sider-derivative-demo}(b) shows the derivatives at the middle points, defined in the index coordinate \(\xi=nt\) by
\(\xi=j+\frac12.\)
Equivalently, these points correspond to
\(t=\frac{j+1/2}{n}.\)
At each such location, the derivative is tangent to the sphere at the reconstructed middle point
\[
        \mathbf{m}_{j+1/2}
        =
        \mathbf{S}_n\!\left(\frac{j+1/2}{n}\right).
\]
Thus
\[
        \mathbf{D}_n\!\left(\frac{j+1/2}{n}\right)
        \in T_{\mathbf{m}_{j+1/2}}\mathbb{S}^2.
\]

The figure confirms the geometric properties established analytically.  Since the SIDER curve is constructed entirely from SLERP operations, it remains on the unit sphere.  Differentiating the identity
\(\|\mathbf{S}_n(t)\|^2=1\)
gives
\(\mathbf{S}_n(t)\cdot \mathbf{D}_n(t)=0,\)
which explains why all displayed derivative arrows lie tangent to the sphere.  The same implementation evaluates SIDER2, SIDER3, and SIDER4 without requiring separately expanded formulas for each order.

\section{Conclusion}

We have developed an analytic first-derivative theory for SIDER interpolation on the unit sphere.  The main result is a recursive chain-rule formula for \(\bD_n(t)=\frac{\dd}{\dd t}\mathbf{S}_n(t)\), obtained by differentiating the SLERP tree that defines SIDER-\(n\).  The only elementary building block required is the total derivative of a single SLERP operation with moving endpoints and a moving interpolation parameter.  Once this block is available, the derivative of any higher-order SIDER curve follows by differentiating the two lower-order branches and applying the same SLERP derivative at the outer node.

The resulting formulas have several useful forms.  For arbitrary parameter values, the derivative is computed by a forward recursive evaluation that returns both the SIDER value and its derivative.  At interpolation nodes, the formula simplifies because the two lower-order branches meet at the same data point.  The derivative at an interior node is then a chain-rule weighted combination of two lower-order derivatives, while the endpoint derivatives contain logarithmic terms associated with the extrapolated branch values generated by the SIDER recursion.  We also derived corresponding formulas at middle points between consecutive sampling locations.  These middle-point derivatives do not usually admit the equal-endpoint simplification, but they are obtained directly from the same total SLERP derivative and therefore require no additional analytic machinery.

We also proved that the derivative produced by this analytic construction is tangent to the sphere at the reconstructed point.  Thus \(\bD_n(t)\in T_{\mathbf{S}_n(t)}\Sph\) for every admissible parameter value.  In particular, at a sampling node \(t_j=j/n\) one has \(\bD_n(t_j)\in T_{\bp_j}\Sph\), and at a middle point \(t_{j+1/2}=(j+1/2)/n\) one has \(\bD_n(t_{j+1/2})\in T_{\mathbf{S}_n(t_{j+1/2})}\Sph\).  This property is a direct consequence of the fact that SIDER is composed entirely of SLERP operations and therefore remains on \(\Sph\).  No separate tangent-plane correction is part of the mathematical formulation.

The derivative recursion extends the earlier SIDER/SENO interpolation framework \cite{fonleu23} by providing differential information for the reconstructed spherical curve.  This is useful for visualization and geometric analysis, but it is also relevant for numerical methods in which local reconstructions must be differentiated.  In particular, high-order finite-volume, ENO/WENO, SENO, and related reconstruction-based methods often require reconstructed values and, in some formulations, derivatives at distinguished evaluation points between neighboring samples.  The formulas derived here provide such derivatives for sphere-valued reconstructions while preserving the underlying manifold constraint.

Future work may include incorporating these derivative formulas into high-order finite-volume or semi-Lagrangian schemes for \(\Sph\)-valued fields, extending the construction to nonuniform parameter spacing, and developing analogous derivative recursions for related interpolation procedures on other manifolds such as rotation groups and more general embedded submanifolds.

\section*{Computational Methodology and Disclosure}

This research utilized GPT-5.5 (Thinking) to perform primary computational modeling and mathematical derivation. The author acted as the principal investigator, defining the research parameters and theoretical scope. The author notes that while the computational workflow was executed via AI, the author maintains full responsibility for the study's conclusions. The author acknowledges that the mathematical depth of the generated results exceeds current manual verification capabilities and presents these findings as AI-assisted hypotheses subject to future formal peer verification. Consequently, this article is intended solely for dissemination as a preprint on arXiv and is not submitted to peer-reviewed journals in its current form.

\bibliographystyle{plain}
\bibliography{syleung}

\end{document}